\theoremstyle{plain}
\newtheorem{theorem}{Theorem}
\newtheorem{lemma}{Lemma}[section]
\newtheorem{proposition}[lemma]{Proposition}
\theoremstyle{definition}
\newtheorem{definition}[lemma]{Definition}
\newtheorem{example}[lemma]{Example}
\theoremstyle{remark}
\newtheorem*{remark}{Remark}
\newcommand{\bC}{\mathbb{C}}
\newcommand{\bP}{\mathbb{P}}
\newcommand{\bR}{\mathbb{R}}
\newcommand{\bZ}{\mathbb{Z}}
\newcommand{\cE}{\mathcal{E}}
\newcommand{\cF}{\mathcal{F}}
\newcommand{\cG}{\mathcal{G}}
\newcommand{\cL}{\mathcal{L}}
\newcommand{\cM}{\mathcal{M}}
\newcommand{\cO}{\mathcal{O}}
\newcommand{\fX}{\mathfrak{X}}
\newcommand{\pt}{\mathrm{pt}}
\newcommand{\vir}{\mathrm{vir}}
\renewcommand{\bar}{\overline}
\renewcommand{\tilde}{\widetilde}
\renewcommand{\vec}{\bm}
\newcommand{\cat}[1]{\mathsf{#1}}
\DeclareMathOperator{\Aut}{Aut}
\DeclareMathOperator{\ev}{ev}
\DeclareMathOperator{\Frac}{Frac}
\DeclareMathOperator{\Res}{Res}
\DeclareMathOperator{\SL}{SL}
\DeclareMathOperator{\val}{val}
\DeclarePairedDelimiter{\inner}{\langle}{\rangle}
\tikzset{%
  vertex/.style={shape=circle,fill=black,minimum size=4pt,inner sep=0},
  nonsing/.style={shape=circle,draw,minimum size=4pt,inner sep=0},
  baseline={([yshift=-0.8ex]current bounding box.center)}
}
\def\relcond#1#2{
  \begin{scope}[shift={#1}]
    \draw (0,0.05) -- (-0.4,0.8) -- (0,2) -- (0.4,1.2) -- cycle;
    \node[vertex] (#2) at (0,0) {};
  \end{scope}
}
\title{Self-duality in quantum K-theory}
\author{Henry Liu}
\date{\today}
\begin{document}

\maketitle

\begin{abstract}
  We describe an attempt to make quantum K-theory (of stable maps)
  more amenable to the self-duality/rigidity arguments of
  \cite{Okounkov2017} in quasimap theory, by twisting the virtual
  structure sheaf. For $\bP^n$ this twist produces invariants which
  are self-dual rational functions, but asymptotic analysis shows this
  is no longer the case for general GKM manifolds such as flag
  varieties. Such analysis is done via an explicit combinatorial
  description of localization for quantum K-theory on GKM manifolds,
  and Givental's adelic characterization.
\end{abstract}

\section{Introduction}

A K-theory class $\cF$ is {\it self-dual} if
\[ \cF = \cG - \hbar \cG^\vee \]
for some K-theory class $\cG$. A useful feature to demand from
enumerative theories of curves is that their deformation theory can be
made self-dual. This is the main feature of the theory of quasimaps to
Nakajima quiver varieties \cite{Okounkov2017}, and implies that, up to
some prefactors, K-theoretic integrals with no insertions over
quasimap moduli all take the form
\footnote{In quasimap theory it is important that we integrate the
  {\it symmetrized} virtual structure sheaf $\hat\cO^{\vir}$ instead
  of the unsymmetrized $\cO^{\vir}$ (see \cite[1.3.7, 1.4.2,
    3.2.7]{Okounkov2017}). The latter will produce expressions of the
  form
  \[ \prod_i \frac{1 - \hbar w_i^{-1}}{1 - w_i}, \]
  which are manifestly not balanced. The twist described in
  section~\ref{sec:cotangent-j-function} is our way of compensating
  for this discrepancy.}
\begin{equation} \label{eq:balanced-rational-function}
  \prod_i \frac{1 - \hbar w_i}{1 - w_i}
\end{equation}
for some equivariant weights $w_i$. Such rational functions are {\it
  bounded} in any equivariant limit $w^\pm \to \infty$, and we call
them {\it balanced}.

In cohomology, a very useful observation is that an (equivariant)
integral over a proper moduli is always a non-equivariant constant. In
K-theory this is no longer true; the result is a Laurent {\it
  polynomial} $p(\vec x)$ in equivariant variables $\vec x$. However
if in addition $p(\vec x)$ is bounded as $\vec x^\pm \to \infty$, then
in fact $p$ is constant. This is the case for invariants arising from
self-dual deformation theories. In general, this phenomenon is called
{\it rigidity}.

When rigidity arguments apply, $p(\vec x)$ can be computed in any
equivariant limit $x_i^\pm \to \infty$, which dramatically simplifies
localization formulas. For example, rigidity is the key tool in
identifying $q$-difference operators for quasimap vertices as qKZ
operators \cite{Okounkov2017}, and the computation of 1- and 2-leg
K-theoretic vertices in Donaldson--Thomas theory \cite{Kononov2019}.

We will investigate self-duality in quantum K-theory, i.e. K-theoretic
Gromov--Witten theory, on the moduli of stable maps $\bar\cM_{0,n}(X,
d)$, first defined in \cite{Lee2004}. When $X$ is a GKM manifold,
equivariant localization provides a combinatorial description of
K-theoretic descendant integrals
\[ \chi\left(\bar\cM_{0,n}(X, d), \prod_{i=1}^n \frac{\ev_i^* \vec\phi_i}{1 - q_i\cL_i}\right). \]
Localization in K-theory has more combinatorial complexity than in
cohomology. Section~\ref{sec:gkm-localization} reviews GKM manifolds
and describes the combinatorial algorithm for localization in
K-theory.

A given stable map $f\colon C \to X$ has tangent-obstruction theory
given by
\[ T^{\vir} \coloneqq \chi(C, f^*T_X) - \chi(\Omega_C(D), \cO_C), \]
which is evidently not self-dual. One can attempt to rectify this by
computing invariants such as the J-function with some {\it insertion}
$\cF$, such that the result is a balanced rational function of the
form \eqref{eq:balanced-rational-function} in every degree. Such an
insertion is described in section~\ref{sec:cotangent-j-function}. We
call the modified J-function the {\it cotangent} J-function and show
in section~\ref{sec:cotangent-j-function-for-Pn} that for $X = \bP^n$
it is self-dual.

Unfortunately, we show in section~\ref{sec:equivariant-infinities}
that the cotangent J-function is unlikely to be balanced for general
$X$, by analyzing its asymptotics in equivariant limits. The insertion
defining the cotangent J-function is a twist of the insertion
$S_{\hbar}^\bullet(R\pi_*\ev^*\Omega_X)^\vee$ producing stable maps
$f\colon C \to T^*X$. We suspect a further twist is necessary in
general.

This project was initially suggested by Andrei Okounkov. The author
wishes to thank him and Alexander Givental for many valuable
discussions, and for reading a draft of this paper. 

\section{Virtual localization}
\label{sec:gkm-localization}

Let $X$ be a GKM manifold with action by a torus $T$. In this section,
we describe (virtual) localization in K-theory for $\bar\cM_{0,n}(X,
d)$. This description of course admits the straightforward
generalization to $\bar\cM_{g,n}(X, d)$. However for genus $g > 0$, we
do not know how to compute the K-theoretic vertex. The K-theoretic
description should be compared to the cohomological description given
in \cite{Liu2017}.

\subsection{GKM graphs}

Let $n\coloneqq \dim X$ and $T = (\bC^\times)^m$. Following
\cite{Guillemin1999}, associated to the GKM manifold $X$ is its GKM
graph $(V, E)$, where:
\begin{enumerate}
\item (vertices) the vertices $V = V(X)$ are $T$-fixed points;
\item (edges) the edges $E = E(X)$ are $T$-invariant $\bP^1$'s;
\item (flags) pairs $(e, v) \in E \times V$ with $e$ incident to $v$
  are called flags, and $E_v$ denotes all flags at $v$.
\end{enumerate}
We will abuse notation and conflate graph-theoretic objects with the
geometric objects they represent. For instance, $N_{e/X}$ denotes the
normal bundle in $X$ of the $\bP^1$ represented by the edge $e$.

The GKM graph is decorated by the weight of the $T$-action on each
edge, recorded in a weight function $\vec w_X$: for an edge $e \in
E_v$ incident to $v$,
\[ \vec w_X(e, v) \in K_T(\pt) = \bZ[a_1^{\pm}, \ldots, a_m^{\pm}] \]
is the weight of the edge $e$ at the vertex $v$. Weights must satisfy
the following properties.
\begin{enumerate}
\item (GKM hypothesis) For every vertex $v$, any two distinct edges
  $e, e' \in E_v$ have independent weights. In K-theory, independence
  of weights means $\vec w_X(e, v) \neq \vec w_X(e', v)^s$ for any $s \in
  \bR$.

\item Let the edge $e$ connect vertices $v, v'$. Then:
  \begin{enumerate}
  \item $\vec w_X(e, v') = \vec w_X(e, v)^{-1}$;
  \item every edge $f_i \in E_v$ pairs with an edge $f'_i \in E_{v'}$
    at the opposite vertex, such that
    \[ \vec w_X(f'_i, v') = \vec w_X(f_i, v) \vec w_X(e, v)^{a_i} \]
    for some integer $a_i \in \bZ$. Order the edges such that $f_n =
    e$, so that $a_n = 2$.
  \end{enumerate}
  The second property corresponds to the decomposition $N_{e/X} =
  \bigoplus_{i=1}^{n-1} \cO_e(a_i)$ and $T_e = \cO_e(2)$.
\end{enumerate}

\begin{example}[Projective spaces] \label{ex:Pn-GKM-notation}
  The GKM graph of $\bP^n$ is the $n$-simplex. Label the vertices from
  $1$ to $n$. For an edge from $i$ to $j$ with $i < j$, the torus
  $(\bC^\times)^{n+1}$ acts with weight
  \[ a_{ij} \coloneqq a_i/a_j. \]
\end{example}

\begin{example}[Full flag varieties] \label{ex:GB-GKM-notation}
  Let $G$ be a semisimple Lie group with torus and Borel $T \subset B
  \subset G$. Then $T$ acts on the (generalized) {\it flag variety}
  $G/B$, and $T$-fixed points biject with the Weyl group. For example,
  the GKM graph of the variety $\SL_3/B$ of full flags in $\bC^3$ is
  \[ \begin{tikzpicture}[scale=0.7]
    \coordinate[vertex,label=left:$e$] (e) at (180:2cm);
    \coordinate[vertex,label=above:$(12)$] (s12) at (120:2cm);
    \coordinate[vertex,label=right:$(13)$] (s13) at (0:2cm);
    \coordinate[vertex,label=below:$(23)$] (s23) at (240:2cm);
    \coordinate[vertex,label=above:$(123)$] (s123) at (60:2cm);
    \coordinate[vertex,label=below:$(132)$] (s132) at (-60:2cm);
    \begin{scope}
      \draw (e) -- (s12);
      \draw (s23) -- (s123);
      \draw (s13) -- (s132);
      \draw[double] (e) -- (s23);
      \draw[double] (s12) -- (s132);
      \draw[double] (s13) -- (s123);
      \draw[dashed] (e) -- (s13);
      \draw[dashed] (s12) -- (s123);
      \draw[dashed] (s23) -- (s132);
    \end{scope}
  \end{tikzpicture}
  \qquad
  \begin{array}{c}
    \begin{tikzpicture}\draw (0,0) -- (1,0);\end{tikzpicture} = a_{12} \\
    \begin{tikzpicture}\draw[double] (0,0) -- (1,0);\end{tikzpicture} = a_{23} \\
    \begin{tikzpicture}\draw[dashed] (0,0) -- (1,0);\end{tikzpicture} = a_{13}
  \end{array}. \]
\end{example}

\subsection{Kawasaki--Riemann--Roch}

The key distinction between localization on orbifolds in K-theory vs
in cohomology is that K-theory sees the geometry of twisted sectors,
and cohomology does not. In other words, if $\fX$ is an orbifold, we
must distinguish between an actual pushforward $\chi(\fX, \cE)$ and a
pushforward $\chi_{\cat{Sch}}(\fX, \cE)$ treating $\fX$ as a scheme.
The usual K-theoretic localization formula
\[ \chi_{\cat{Sch}}(\fX, \cE) = \sum_{F \subset \fX^T} \chi_{\cat{Sch}}(F, \cE|_F \cdot S^\bullet(N_{F/\fX}^{\vir})^\vee) \]
will fail if we choose the latter. In cohomology, this is because of
constant stacky factors like $|\Aut|$, but in K-theory the failure is
much more dramatic.

\begin{example}
  Let $\fX = \bP(m, 1)$ be a weighted projective line with an orbifold
  point at $0$. A torus $\bC^\times$ acts with weight $q^{1/m}$ around
  $0$ and weight $q^{-1}$ around $\infty$. Naively localizing gives
  \[ \chi_{\cat{Sch}}(\fX, \cO_\fX) = \frac{1}{1 - q^{-1/m}} + \frac{1}{1 - q}. \]
  This is not equal to the expected answer $1$, and worse, involves
  fractional powers of $q$.
\end{example}

This failure occurred because we forgot the stacky nature of the fixed
loci. Namely, the fixed point $0 \in \bP(m, 1)$ is actually a copy of
$B\mu_m$. K-theoretic integrals over it must involve taking
$\mu_m$-invariants, like in Lefschetz's fixed point formula: if
$\zeta_m$ is a primitive $m$-th root of unity, then
\begin{equation} \label{eq:lefschetz-averaging}
  \chi(B\mu_m, S^\bullet(N_{0/\fX})^\vee) = \frac{1}{m} \sum_{k=0}^{m-1} \frac{1}{1 - \zeta_m^k q^{-1/m}} = \frac{1}{1 - q^{-1}}.
\end{equation}
In the example, if we put this together with the contribution at
$\infty$, then we get the desired answer $1$.

The general principle is encapsulated in a Riemann--Roch formula for
Deligne--Mumford stacks. The idea, like in orbifold cohomology, is to
work with the inertia stack $I\fX = \bigsqcup_\mu \fX_\mu$. Here
$\fX_\mu$ are the connected components, which we view as embedded in
$\fX$ with normal bundle $N_{\fX_\mu/\fX}$. For example, if $\fX =
[V/G]$ is a global quotient,
\[ I[V/G] = \bigsqcup_{g \in \cat{Conj}(G)} [V^g/C(g)], \]
where we range over conjugacy classes and $C(g)$ is the centralizer of
$g$ in $G$. For simplicity, we define objects in this local case only.
Each component $\fX_\mu$ has a multiplicity
\[ m_\mu \coloneqq \ker\left(C(g) \to \Aut V^g\right). \]
An orbi-bundle $\cE$ on $[V^g/C(g)]$ decomposes into eigenbundles $\cE
= \sum_{k=0}^{m-1} \cE_k$ for the $g$-action, where $m$ is the order
of $g$. Define the virtual orbi-bundle
\begin{equation} \label{eq:trace-orbibundle-at-sector}
  t_\mu(\cE) \coloneqq \sum_{k=0}^{m-1} \zeta_m^k \cE_k.
\end{equation}

\begin{theorem}[\cite{Kawasaki1979}, \cite{Edidin2013}]
  \begin{equation} \label{eq:stacky-RR}
    \chi(\fX, \cE) = \sum_\mu \frac{1}{m_\mu} \chi_{\cat{Sch}}\left(\fX_\mu, t_\mu\left(\cE|_{\fX_\mu} \cdot S^\bullet(N_{\fX_\mu/\fX})^\vee\right)\right).
  \end{equation}
\end{theorem}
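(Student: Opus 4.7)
My plan is to reduce \eqref{eq:stacky-RR} to the local model of a global quotient $[V/G]$ with $G$ finite and then derive the formula from the holomorphic Lefschetz fixed-point theorem applied element-by-element. By the local structure theorem for Deligne--Mumford stacks (Keel--Mori), any point of $\fX$ has an étale neighborhood of this form. Both the inertia construction $\mu \mapsto \fX_\mu$ and the orbifold trace $t_\mu$ from \eqref{eq:trace-orbibundle-at-sector} are functorial under étale maps, so étale descent for $\chi$ lets me glue the local statements back into the global one.

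On a global quotient, sections of an orbi-bundle $\cE$ are the $G$-invariant sections of its pullback to $V$, giving the averaging identity
\[ \chi([V/G], \cE) = \frac{1}{|G|} \sum_{g \in G} \operatorname{tr}\bigl(g \mid \chi(V, \cE)\bigr). \]
I then apply the holomorphic Lefschetz (Atiyah--Bott) fixed-point formula to each $g$, which computes
\[ \operatorname{tr}\bigl(g \mid \chi(V, \cE)\bigr) = \chi_{\cat{Sch}}\bigl(V^g,\, t_g(\cE|_{V^g}) \cdot t_g(S^\bullet N_{V^g/V}^\vee)\bigr). \]
Here the identity $t_g(\Lambda^\bullet N^\vee)^{-1} = t_g(S^\bullet N^\vee)$ is the elementary $(1-\zeta_m^k x)^{-1} = \sum_n (\zeta_m^k x)^n$ applied after splitting $N_{V^g/V}$ into $g$-eigenline bundles, which matches the $S^\bullet$ factor in the target formula.

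The remaining step is to reorganize the sum over $g$ by conjugacy classes of $G$. Conjugation by $h$ identifies $V^g \cong V^{hgh^{-1}}$ compatibly with $t_g$, so each class contributes with collapsed weight $|G|^{-1}|G/C(g)| = |C(g)|^{-1}$, matching the inertia decomposition $I[V/G] = \bigsqcup_{[g]} [V^g/C(g)]$. Finally, since $C(g)$ acts on $V^g$ with kernel of order $m_\mu$, the conversion $\chi_{\cat{Sch}}([V^g/C(g)], \cdot) = (m_\mu / |C(g)|)\, \chi_{\cat{Sch}}(V^g, \cdot)$ turns $1/|C(g)|$ into the desired prefactor $1/m_\mu$. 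I expect the main obstacle to be this final bookkeeping: pinning down exactly what $\chi_{\cat{Sch}}$ means on a stacky sector $\fX_\mu$ so that dividing only by the \emph{effective} image $|C(g)|/m_\mu$ of the $C(g)$-action leaves the residual gerbe multiplicity $m_\mu$ as a clean prefactor, and then verifying that this local normalization is compatible with the étale descent of the first step (and with the ambient $T$-equivariance, which requires running Lefschetz in the $T \times G$-equivariant category).
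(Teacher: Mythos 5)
The paper does not prove this statement; it is imported wholesale from Kawasaki and Edidin, so there is no in-text argument to compare yours against and it must be judged on its own. Your skeleton is the right one for the case where $\fX=[V/G]$ is \emph{globally} a quotient of a proper smooth variety by a finite group: average over $g\in G$, apply the holomorphic Lefschetz formula to each $g$, regroup by conjugacy classes to turn $1/|G|$ into $1/|C(g)|$, and absorb the generic stabilizer into $1/m_\mu$. The identity $t_g(\Lambda^\bullet_{-1}N^\vee)^{-1}=t_g(S^\bullet N^\vee)$ is indeed formal once $N_{V^g/V}$ is split into $g$-eigenlines with no eigenvalue $1$.

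The genuine gap is the reduction step. The \'etale charts $[V/G]$ supplied by Keel--Mori have $V$ affine, hence non-proper, so on such a chart $\chi(V,\cE)$ is infinite-dimensional: neither the averaging identity $\chi([V/G],\cE)=\frac{1}{|G|}\sum_g\operatorname{tr}(g\mid\chi(V,\cE))$ nor the Atiyah--Bott fixed-point theorem makes sense there. Moreover ``\'etale descent for $\chi$'' is not a valid gluing principle --- the Euler characteristic is a global invariant and is not assembled by summing \'etale-local contributions --- so even granting each chart a number, you have no mechanism to recover $\chi(\fX,\cE)$ from them. This local-to-global passage is precisely the content of the theorem, and the cited proofs handle it by different means: Kawasaki proves an index theorem for V-manifolds in which the heat-kernel index density localizes near the orbifold strata (that is where local uniformizing charts legitimately enter), while the algebraic proofs (To\"en, Edidin--Graham) go through a Riemann--Roch transformation to the coarse moduli space together with completion/d\'evissage theorems in equivariant K-theory. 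Your closing worry about $\chi_{\cat{Sch}}$ on $\fX_\mu$ is also a real unresolved point --- equating $\chi_{\cat{Sch}}([V^g/C(g)],-)$ with $(m_\mu/|C(g)|)\,\chi(V^g,-)$ conflates taking $C(g)$-invariants with averaging, and these differ unless one first sums over the residual group --- and for the use made of the formula in this paper one additionally needs the $T$-equivariant and virtual versions, the latter being a separate theorem of Tonita rather than a formal corollary.
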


This is compatible with the virtual structure sheaf, i.e. with $\cE
\otimes \cO^{\vir}$ on the lhs and $N^{\vir}$ on the rhs
\cite{Tonita2014}.

\subsection{Fixed loci}

We briefly review the combinatorics of $T$-fixed loci in
$\bar\cM_{0,n}(X, d)$ for cohomological localization. Since fixed loci
in $\bar\cM_{0,n}(X, d)$ can be non-trivial orbifolds in general, for
K-theoretic localization the combinatorial data must be augmented by
the data of twisted sectors.

\begin{definition}[Cohomological trees]
  Given a fixed locus $F$, pick a stable map $f\colon (C, x) \to X$ in
  it and construct a decorated tree $\Gamma$ as follows (cf.
  \cite[section 9.2]{Cox1999}).
  \begin{enumerate}
  \item The vertices $V(\Gamma)$ represent {\it contracted} components
    of the source curve $C$, i.e. the connected components of
    $f^{-1}(X^T)$. Each vertex $v \in V(\Gamma)$ is labeled by any
    marked points it carries.
  \item The edges $E(\Gamma)$ represent {\it uncontracted} components
    of $C$. Each edge $e \in E(\Gamma)$ is labeled by the degree of
    the map $f$ on it:
    \[ \deg e \coloneqq \deg f|_e \in \bZ_{> 0}. \]
    In diagrams, to reduce clutter, we only label edges with their
    degree when $\deg > 1$.
  \end{enumerate}
\end{definition}

We will conflate vertices and edges in $\Gamma$ with their images in
(the GKM graph of) $X$. In particular, in diagrams, vertices will be
labeled by fixed points, in $V(X)$. If a vertex carries the $i$-th
marked point, we label it with a $\star_i$.

\begin{example}
  Consider $\bar\cM_{0,1}(\bP^1, d)$. We will only look at loci where
  the single marked point is at $0 \in \bP^1$. The only degree $1$ map
  $C \to \bP^1$ is the isomorphism, whose decorated tree is
  \[ \begin{tikzpicture}
    \coordinate[vertex,label=below:$0$,label=above:$\star_1$] (v1);
    \coordinate[vertex,right of=v1,label=below:$\infty$] (v2);
    \draw (v1) -- (v2);
  \end{tikzpicture}. \]
  In degree $2$, there are three different decorated trees:
  \begin{equation} \label{eq:P1-d2-trees}
    \begin{tikzpicture}
      \coordinate[vertex,label=below:$0$,label=above:$\star_1$] (v1);
      \coordinate[vertex,right of=v1,label=below:$\infty$] (v2);
      \draw (v1) -- node[label=$2$]{} (v2);
    \end{tikzpicture} \qquad
    \begin{tikzpicture}
      \coordinate[vertex,label=below:$0$,label=above:$\star_1$] (v1);
      \coordinate[vertex,right of=v1,label=below:$\infty$] (v2);
      \coordinate[vertex,right of=v2,label=below:$0$] (v3);
      \draw (v1) -- (v2) -- (v3);
    \end{tikzpicture} \qquad
    \begin{tikzpicture}
      \coordinate[vertex,label=below:$\infty$] (v1);
      \coordinate[vertex,right of=v1,label=below:$0$,label=above:$\star_1$] (v2);
      \coordinate[vertex,right of=v2,label=below:$\infty$] (v3);
      \draw (v1) -- (v2) -- (v3);
    \end{tikzpicture}.
  \end{equation}
  Note that there is a recursive structure. For a degree-$d$ tree,
  removing the marked vertex $v$ and its incident edge(s) produces a
  (collection of) trees in lower degree, whose marked vertex is a
  neighbor of $v$ and whose total degree is $d$ minus the degree(s) of
  the incident edge(s). Conversely, all degree-$d$ trees arise this
  way. This provides a recursive enumeration of $1$-pointed trees, as
  well as an important recursive structure in the J-function which we
  will see later.
\end{example}

Now we add stacky data. There are two kinds of automorphisms of
$\Gamma$: a non-trivial $\mu_d$ for an edge of degree $d$, and an
actual (structural) graph automorphism $S_k$ for every vertex with $k$
isomorphic legs. These two types of automorphisms {\it cannot} be
dealt with separately in localization, as we will see.

Suppose $\Gamma$ consists of just a single edge of degree $d$ with
weight $\vec w_X(e, v)$ in the target. Then $\Aut\Gamma = \mu_d$. From
the Kawasaki--Riemann--Roch formula \eqref{eq:stacky-RR}, we see that
in the $k$-th twisted sector of $B\mu_d$, the weight of the induced
$T$-action on the edge in the source is effectively $\zeta_d^k \vec
w_X(e, v)^{1/\deg e}$, living in a multiple cover of $K_T(\pt)$. Hence
the tree $\Gamma$ splits into $d$ different trees, for each of the $d$
sectors. Instead of just labeling the edge by $d$, we add a subscript
$k$ to the $d$. For example,
\begin{equation} \label{eq:cohom-to-K-theory-trees}
  \begin{tikzpicture}
    \coordinate[vertex,label=below:$a$] (v1);
    \coordinate[vertex,right of=v1,label=below:$b$] (v2);
    \draw (v1) -- node[label=$2$]{} (v2);
  \end{tikzpicture}
  \quad \leadsto \quad
  \begin{tikzpicture}
    \coordinate[vertex,label=below:$a$] (v1);
    \coordinate[vertex,right of=v1,label=below:$b$] (v2);
    \draw (v1) -- node[label=$2_0$]{} (v2);
  \end{tikzpicture} \quad
  \begin{tikzpicture}
    \coordinate[vertex,label=below:$a$] (v1);
    \coordinate[vertex,right of=v1,label=below:$b$] (v2);
    \draw (v1) -- node[label=$2_1$]{} (v2);
  \end{tikzpicture}.
\end{equation}

\begin{definition}[K-theoretic trees]
  A K-theoretic tree $\Gamma$ has the following additional data.
  \begin{enumerate}
  \item Vertices with $m$ groups of $k_i$ isomorphic legs have
    non-trivial automorphism group
    \[ S_{\vec k} \coloneqq S_{k_1} \times \cdots \times S_{k_m}. \]
    This data is implicit in the diagram and not explicitly written.

  \item Edges are labeled with their degree $d$ and {\it sector} $s$,
    with sector written as a subscript, i.e. $d_s$. The weight of the
    edge in the source is
    \[ \vec w_\Gamma(e, v) \coloneqq \zeta_d^s \vec w_X(e, v)^{1/d}. \]
    So a cohomological tree with non-trivial edge degrees generates
    many K-theoretic trees.
  \end{enumerate}
\end{definition}

It is important to write the new trees explicitly, because some new
trees may have extra structural automorphisms. For example, there are
degree-$4$ trees
\[ \begin{tikzpicture}
  \coordinate[vertex,left of=v1, label=below:$b$] (v3);
  \coordinate[vertex,label=below:$a$] (v1);
  \coordinate[vertex,right of=v1,label=below:$b$] (v2);
  \draw (v3) -- node[label=$2_0$]{} (v1) -- node[label=$2_0$]{} (v2);
\end{tikzpicture} \qquad
\begin{tikzpicture}
  \coordinate[vertex,left of=v1, label=below:$b$] (v3);
  \coordinate[vertex,label=below:$a$] (v1);
  \coordinate[vertex,right of=v1,label=below:$b$] (v2);
  \draw (v3) -- node[label=$2_0$]{} (v1) -- node[label=$2_1$]{} (v2);
\end{tikzpicture} \]
where the first tree has a structural $S_2$ automorphism and the
second tree does not. 

\subsection{Edges}

It remains to identify edge and vertex contributions to $S^\bullet
N_{\Gamma/X}^{\vir}$ for a K-theoretic tree $\Gamma$. There is no
additional K-theoretic complexity for edges once we work with
K-theoretic trees, so we describe edge contributions first.

Edge contributions come from deformations of the map $f$, which are
controlled by $\chi(C, f^*T_X)$. Let $e \in E(\Gamma)$ be an edge from
$v$ to $v'$. Then each term $\cO_e(a_i)$ in its normal bundle arises
from an edge $f_i \in E_v(X)$, and contributes to
$N_{\Gamma/\cM}^{\vir}$ the term
\[ \chi(e, \cO_e(a_i \deg e)) = \begin{cases} \;\,\displaystyle\sum_{k=0}^{a_i \deg e} \vec w_X(f_i, v) \vec w_\Gamma(e, v)^{-k} & a_i \ge 0 \\ \displaystyle\sum_{k=1}^{a_i \deg e - 1} \!\!\vec w_X(f_i, v) \vec w_\Gamma(e, v)^k & a_i < 0. \end{cases} \]
A similar computation holds for the tangent bundle $\cO_e(2)$, which
contains a single $T$-fixed weight $1$. After applying $S^\bullet$,
all cases unify if we introduce a {\it reduced} $q$-Pochhammer symbol
\[ (x; q)_d^{\text{red}} \coloneqq \frac{(x; q)_\infty^{\text{red}}}{(q^d x; q)_\infty^{\text{red}}}, \quad (x; q)_\infty^{\text{red}} \coloneqq \prod_{\substack{k \ge 0\\q^kx \neq 1}} (1 - q^k x) \]
to manually remove the $1$. Using this notation, the total
contribution of the edge $e$ is
\begin{equation} \label{eq:localization-edge-contribution}
  \frac{1}{\deg e} \frac{1}{(\vec w_X(e, v); \vec w_\Gamma(e, v)^{-1})_{2\deg e+1}^{\text{red}}} \prod_{i=1}^{n-1} \frac{1}{(\vec w_X(f_i, v); \vec w_\Gamma(e, v)^{-1})_{a_i \deg e + 1}}.
\end{equation}
The prefactor comes from averaging and is the same as the one in
\eqref{eq:stacky-RR}. For clarity, we separate the first term, coming
from the tangent bundle, and the other terms, coming from the normal
bundle.

\subsection{Vertices}

Vertex contributions to $S^\bullet N_{\Gamma/X}^{\vir}$ are more
difficult. Suppose a vertex $v \in V(\Gamma)$ has no marked points,
but has groups of isomorphic legs
\[ \Gamma_1 \cong \Gamma_2 \cong \cdots \cong \Gamma_{k_1}, \quad \Gamma_{k_1+1} \cong \cdots \cong \Gamma_{k_2}, \quad \ldots, \quad \cdots \cong \Gamma_{k_r}. \]
Then there is a permutation action by
\[ S_{\vec k} \coloneqq S_{k_1} \times \cdots \times S_{k_r}, \]
which means we must compute invariants over $[\bar\cM_{0,n}/S_{\vec
    k}]$ instead of over $\bar\cM_{0,n}$. In general, we can always
put such permutation actions on marked points in $\bar\cM_{g,n}(X,
d)$. This is Givental's permutation-equivariant quantum K-theory
\cite{Givental2015}, which has invariants
\[ \inner*{\cE}^{X,S_{\vec k}}_{g,n,d} \coloneqq \chi\left([\bar\cM_{g,n}(X, d) / S_n], \cE \otimes \cO^{\vir}\right) \]
for various sheaf insertions $\cE$. For the purpose of describing
virtual localization, we only need to understand the
permutation-equivariant theory of a point,
\[ \inner*{\cE}^{S_{\vec k}}_{0,n} \coloneqq \inner*{\cE}^{\pt, S_{\vec k}}_{0,n,d}, \]
which are precisely the vertex contributions for appropriate
insertions $\cE$.

We first explicitly describe the necessary insertion $\cE$. For
simplicity, suppose there is only one group of $k = k_1$ isomorphic
legs, permuted by a single $S_k$, which are the first $k$ out of $n$
legs. Let
\[ \vec \gamma_1 = \cdots = \vec \gamma_k, \; \vec \gamma_{k+1}, \ldots, \vec \gamma_n \in \Frac K_T(\pt) \]
be the contributions of the $n$ legs, which are of lower degree and
therefore we can assume have been computed already. Let
\[ w_i \coloneqq \vec w_\Gamma(e_i, v) \]
be the weight of the $i$-th leg, connected to $v$ by the edge $e_i$.
The connection, geometrically, is a node, whose deformation is
controlled by $w_i \cL_i$. There are also terms $(f|_v)^*T_X = T_vX$
arising from normalization of deformations of the map $f$. All
together, the integral we must compute at the vertex is
\begin{equation} \label{eq:localization-vertex-integral}
  \frac{1}{\wedge_{-1}^\bullet T_v X} \inner*{\prod_{i=1}^n \frac{\vec \gamma_i \cdot \wedge_{-1}^\bullet T_v X}{1 - w_i \cdot \cL_i}}^{S_k}_{0,n}
\end{equation}
(Compare with the vertex contributions in cohomology, e.g. in
\cite{Liu2017}.) If there were no permutation action, then the
numerator factors out of the integral and
\begin{equation} \label{eq:localization-non-equivariant-vertex-integral}
  \frac{1}{\wedge_{-1}^\bullet T_v X} \inner*{\prod_{i=1}^n \frac{\vec \gamma_i \cdot \wedge_{-1}^\bullet T_v X}{1 - w_i \cdot \cL_i}}_{0,n} \hspace{-5pt} = \;\left(\wedge_{-1}^\bullet T_v X\right)^{n-1} \prod_{i=1}^n \vec \gamma_i \cdot \inner*{\prod_{i=1}^n \frac{1}{1 - w_i \cdot \cL_i}}_{0,n}.
\end{equation}
Such non-permutation-equivariant integrals are the K-theoretic
analogue of gravitational descendants, and have an explicit formula.
(Note that, just as in the cohomological case, this formula continues
to work for $n = 1$ and $n = 2$, even though there is no actual
integral at the vertex in those cases.)

\begin{proposition}[\cite{Lee1997}]
  \begin{equation} \label{eq:YP-formula}
    \inner*{\prod_{i=1}^n \frac{1}{1 - w_i \cL_i}}_{0,n} \hspace{-5pt} = \;\left(1 + \sum_{i=1}^n \frac{w_i}{1 - w_i}\right)^{n-3} \prod_{i=1}^n \frac{1}{1 - w_i}.
  \end{equation}
\end{proposition}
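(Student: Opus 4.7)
My plan is to expand the integral as a formal power series in $w_1,\ldots,w_n$, set up a recursion in $n$ via the forgetful map $\pi\colon\bar\cM_{0,n+1}\to\bar\cM_{0,n}$ dropping the last marked point, and verify that the proposed closed form satisfies the same recursion with the same base case. Explicitly, write
\[ F_n(\vec w) \coloneqq \inner*{\prod_{i=1}^n \frac{1}{1-w_i\cL_i}}_{0,n} = \sum_{\vec d \geq 0} \vec w^{\vec d}\, \chi\!\left(\bar\cM_{0,n}, \bigotimes_{i=1}^n \cL_i^{d_i}\right), \]
and let $G_n(\vec w)$ denote the right-hand side of \eqref{eq:YP-formula}. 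The base case $n=3$ is immediate since $\bar\cM_{0,3}=\pt$ and all $\cL_i$ are trivial, giving $F_3 = G_3 = \prod_i 1/(1-w_i)$ (here the exponent $n-3=0$ saves us from worrying about the bracketed factor).

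For the inductive step, the key input is the comparison $\cL_i = \pi^*\cL_i\otimes\cO(D_{i,n+1})$ on $\bar\cM_{0,n+1}$ for $i\leq n$, together with $\cL_i|_{D_{i,n+1}}\cong\cO$. Iterating the short exact sequences
\[ 0\to\cO((k-1)D_{i,n+1})\to\cO(kD_{i,n+1})\to\cO_{D_{i,n+1}}\to 0 \]
and applying the projection formula expresses $R\pi_*\bigl(\bigotimes_{i=1}^n \cL_i^{d_i}\bigr)$ in terms of K-theoretic descendants on $\bar\cM_{0,n}$; the remaining factor $\cL_{n+1}^{d_{n+1}}$ is then handled by viewing $\pi$ as the universal curve and pushing down fiberwise. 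Repackaging at the generating-function level produces the K-theoretic string and dilaton equations of \cite{Lee1997} and \cite{Givental2015}. The $d_{n+1}=0$ specialization in particular yields the K-theoretic string relation
\[ F_{n+1}(\vec w, 0) = \Bigl(1 + \sum_{i=1}^n \frac{w_i}{1-w_i}\Bigr)\, F_n(\vec w), \]
and one checks directly that $G_n$ obeys the matching identity $G_{n+1}(\vec w,0)=(1+\sum_{i=1}^n w_i/(1-w_i))G_n(\vec w)$. Using the symmetry of both $F_n$ and $G_n$ in their arguments and induction on $n$, these relations together with the higher-descendant (dilaton-type) analogues pin down $F_n=G_n$.

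The main difficulty is the case $d_{n+1}>0$: the string relation alone is insufficient and the full suite of K-theoretic topological recursion relations must be assembled, with the boundary and universal-curve contributions interacting nontrivially through the projection formula. The combinatorial step collapsing the resulting recursion into the closed form $(1+\sum_i w_i/(1-w_i))^{n-3}$ is also non-obvious; I would cross-check it in the symmetric specialization $w_1=\cdots=w_n=w$, where both $F_n$ and $G_n$ reduce to single-variable power series in $w$ whose coincidence can be verified by a Lagrange-inversion calculation.
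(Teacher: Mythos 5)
The paper does not prove this proposition; it simply cites \cite{Lee1997}, so there is no internal argument to compare against, and your sketch must stand on its own. The setup you describe is sound: the base case $n=3$ is immediate, and using $\cL_i = \pi^*\cL_i \otimes \cO(D_{i,n+1})$, the disjointness of the divisors $D_{i,n+1}$ in genus zero, and the identification $\cO_{D_{i,n+1}}(D_{i,n+1}) \cong \cL_i^{-1}$, the iterated short exact sequences together with the projection formula do give
\[ R\pi_*\!\left(\prod_{i=1}^n \frac{1}{1-w_i\cL_i}\right) = \left(1 + \sum_{i=1}^n \frac{w_i}{1-w_i}\right) \prod_{i=1}^n \frac{1}{1-w_i\cL_i}, \]
hence the string relation $F_{n+1}(\vec w, 0) = (1 + \sum_i w_i/(1-w_i))\,F_n(\vec w)$, and the right-hand side $G_n$ visibly satisfies the same relation.

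The genuine gap is exactly where you say it is: the string relation plus symmetry only tells you that $F_{n+1} - G_{n+1}$ vanishes on each hyperplane $w_j = 0$, which for power series says nothing beyond divisibility by $\prod_j w_j$. Deferring to ``the full suite of K-theoretic topological recursion relations'' is not an argument --- you neither state nor verify them, and you concede the combinatorial collapse to $(1+\sum_i w_i/(1-w_i))^{n-3}$ is unresolved --- while a Lagrange-inversion ``cross-check'' in the symmetric specialization is a plausibility test, not a proof. In fact you do not need any of that machinery. By Hirzebruch--Riemann--Roch on the smooth $(n-3)$-dimensional variety $\bar\cM_{0,n}$, the coefficient $\chi(\bar\cM_{0,n}, \bigotimes_i \cL_i^{d_i})$ of $\vec w^{\vec d}$ in $F_n$ is a polynomial in $d_1,\ldots,d_n$ of total degree at most $n-3$; the corresponding coefficient of $G_n$ is a polynomial of the same degree bound, since each term of $(1+\sum_i w_i/(1-w_i))^{n-3}\prod_i(1-w_i)^{-1}$ contributes products $\prod_i \binom{d_i}{a_i}$ with $\sum a_i \le n-3$. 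A polynomial of degree $\le n-3$ in $n$ variables vanishing on all $n$ coordinate hyperplanes must be divisible by $\prod_j d_j$, of degree $n > n-3$, hence is identically zero. That closes the induction from the string relation alone, with no TRR or dilaton input required.
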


In the permutation-equivariant case, we must again keep in mind
\eqref{eq:stacky-RR}. Characters of the $S_k$ action have different
values on different permuted legs, and so it is important that we do
{\it not} factor out the numerator like we did in
\eqref{eq:localization-non-equivariant-vertex-integral}.

\begin{example}
  Consider $[\bar\cM_{0,3}/S_2]$, where the $S_2$ permutes the first
  two marked points. Up to isomorphism, there is only one tree in
  $\bar\cM_{0,3}$, which we draw as
  \[ \begin{tikzpicture}
    \draw (-0.5,0) -- (2.5,0);
    \coordinate[vertex,label=below:$1$] (v1) at (0,0);
    \coordinate[vertex,label=below:$2$] (v2) at (1,0);
    \coordinate[vertex,label=below:$3$] (v3) at (2,0);
  \end{tikzpicture}. \]
  When we integrate over $[\bar\cM_{0,3}/S_2]$, this tree splits into
  two trees --- one for the {\it unpermuted} sector, and one for the
  {\it permuted} sector:
  \[ \begin{tikzpicture}[baseline=0]
    \draw (-0.5,0) -- (2.5,0);
    \coordinate[vertex,label=below:$1$] (v1) at (0,0);
    \coordinate[vertex,label=below:$2$] (v2) at (1,0);
    \coordinate[vertex,label=below:$3$] (v3) at (2,0);
  \end{tikzpicture} \qquad
  \begin{tikzpicture}[baseline=0]
    \draw (-0.5,0) -- (2.5,0);
    \coordinate[vertex,label=below:$1$] (v1) at (0,0);
    \coordinate[vertex,label=below:$2$] (v2) at (1,0);
    \coordinate[vertex,label=below:$3$] (v3) at (2,0);
    \draw[thick,blue,<->] ($(v2)+(0,0.2)$) to [out=90,in=90] ($(v3)+(0,0.2)$);
  \end{tikzpicture} \]
  The arrow indicates that there is a non-trivial cyclic permutation
  on the marked points $2, 3$. This permutation is achieved via the
  map $z \mapsto -z$, and affects even the first marked point. For
  example,
  \begin{equation} \label{eq:M03-S2-formula}
    \inner*{\frac{1}{1 - q\cL_1}, \frac{1 - w}{1 - t\cL_2}, \frac{1 - w}{1 - t\cL_3}}_{0,3}^{S_2} = \frac{1}{2} \left(\frac{(1 - w)(1 - w)}{(1 - q)(1 - t)(1 - t)} + \frac{(1 - w)(1 \pmb{+} w)}{(1 \pmb{+} q)(1 - t)(1 \pmb{+} t)}\right).
  \end{equation}
\end{example}

In general, the symmetrization arising from cyclically permuting $r$
points can be written using Adams operations
\[ \Psi^r(a_i) \coloneqq a_i^r \in K_T(\pt) = \bZ[a_1^\pm, \ldots, a_m^\pm]. \]
For example, using the notation of
\eqref{eq:trace-orbibundle-at-sector}, if $\sigma$ contains a cycle of
length $r$, i.e. $r$ marked points on a single $\bP^1$ cyclically
permuted, then
\begin{equation} \label{eq:cyclic-permutation-formula}
  t_{\sigma} \left(\prod_{i=1}^r \frac{\vec \gamma}{1 - w\cL_i} \times \cdots\right) = \frac{\Psi^r(\vec \gamma)}{1 - w^r} \times \cdots.
\end{equation}
Every $S_k$-fixed tree is a collection of $\bP^1$'s connected by
nodes. For a given conjugacy class $[\sigma]$ in $S_k$, some $\bP^1$'s
carry cyclically permuted marked points (corresponding to cycles in
$\sigma$), and some carry non-permuted marked points. 
\[ \begin{tikzpicture}[rotate=-45,scale=0.7]
  \draw (-0.5,0) -- (2.5,0);
  \node[draw,vertex] at (0,0) {};
  \node[draw,vertex] at (1,0) {};
  \draw (2,-0.5) -- (2,4.5);
  \node[draw,vertex] (v1) at (2,1) {};
  \node[draw,vertex] at (2,2) {};
  \node[draw,vertex] (v2) at (2,3) {};
  \draw[thick,blue,<->] ($(v1)-(0.4,0)$) to [out=180,in=180] ($(v2)-(0.4,0)$);
  \draw (1.5,4) -- (4.5,4);
  \node[draw,vertex] at (3,4) {};
  \draw (4,3.5) -- (4,6.5);
  \draw (3.5,5) -- (6.5,5);
  \node[draw,vertex] (v3) at (5,5) {};
  \node[draw,vertex] (v4) at (6,5) {};
  \draw[thick,blue,<->] ($(v3)-(0,0.4)$) to [out=-90,in=-90,looseness=2] ($(v4)-(0,0.4)$);
  \draw (3.5,6) -- (6.5,6);
  \node[draw,vertex] at (5,6) {};
  \node[draw,vertex] at (6,6) {};
\end{tikzpicture} \]
The invariants in the two cases decouple, and we now know how to
handle both. In principle, this allows us to compute arbitrary vertex
integrals, but is combinatorially complicated.

In practice, there is a less combinatorially-involved algorithm for
permutation-equivariant vertices, arising from the adelic
characterization (cf. Theorems~\ref{thm:adelic-characterization},
\ref{thm:vertex-characterization}) of the {\it big} J-function of a
point in \cite[III, IX, X]{Givental2015}. This algorithm involves
recursively computing a change of parameters up to the desired degree.
However, it is not amenable to taking limits in equivariant
parameters, as we will do in Section~\ref{sec:equivariant-infinities}.
So we stick with the naive combinatorial description of the vertex.

\subsection{Algorithm}

To summarize, the contribution of a K-theoretic tree $\Gamma$ to an
invariant
\[ \chi\left(\bar\cM_{0,n}(X, d), \prod_{i=1}^n \frac{\ev_i^* \vec\phi_i}{1 - q_i \cL_i} \right) \]
is computed recursively as follows. It is the product of two types of
contributions: permutation-equivariant vertices, and usual
vertices/edges.

Let $V^{\text{perm}} \subset V(\Gamma)$ be the set of all vertices $v$
fixed by all automorphisms, but which have a non-trivial number of
isomorphic legs $\{\Gamma_i\}_{i=1}^r$ where $\Gamma_i$ appears $k_i$
times. These are the vertices where we must use
permutation-equivariant theory.
\begin{enumerate}
\item If $k_i > 1$, compute the contribution from $\Gamma_i$ using
  this algorithm recursively, and call it $\vec \gamma_i$.
\item If $k_i = 1$, so that the leg is not involved in any permutation
  action, set $\vec \gamma_i = 1$. Contributions from such legs will
  be included later.
\end{enumerate}
Let $I \subset \{1, \ldots, n\}$ be the set of marked points carried
by $v$. Then the total contribution from $v \in V^{\text{perm}}$ (cf.
\eqref{eq:localization-vertex-integral}) is
\begin{equation}
  \frac{1}{\wedge_{-1}^\bullet T_vX} \inner*{\prod_{i=1}^r \left(\frac{\vec \gamma_i \cdot \wedge_{-1}^\bullet T_vX}{1 - \vec w_\Gamma(e_i, v) \cdot \cL}\right)^{k_i} \prod_{i \in I} \vec\phi_i\big|_v}^{S_{\vec k}}_{0,\;\sum_{i=1}^r k_i + |I|}.
\end{equation}

Let $E^{\text{fixed}} \subset E(\Gamma)$ consist of all edges not
moved by any automorphism, and let $V^{\text{fixed}} \subset
V(\Gamma)$ be vertices incident to such edges but not in
$V^{\text{perm}}$. These are vertices and edges which are not involved
in permutation-equivariant machinery, whose contributions we compute
directly using \eqref{eq:localization-edge-contribution} for edges $e
\in E^{\text{fixed}}$ and
\eqref{eq:localization-non-equivariant-vertex-integral},
\eqref{eq:YP-formula} for vertices $v \in V^{\text{fixed}}$. (These
edges include the unpermuted legs for vertices $v \in V^{\text{perm}}$
above.)

The final value of the invariant is obtained by summing over all
K-theoretic trees $\Gamma$.

\section{J-function}

The J-function is an integral over $\bar\cM_{0,1}(X, d)$. The
$T$-fixed loci here splits into different components where the single
marked point is mapped to different fixed points $v \in V(X)$. Write
this splitting as
\[ \bar\cM_{0,1}(X, d)^T = \bigsqcup_{v \in V(X)} \bar\cM_{0,1}(X, d)_v. \]

\begin{definition}
  Fix $v \in V(X)$. The (small) {\it J-function} at $v$ is
  \[ J_{X,v}(q) \coloneqq 1 + \sum_{d > 0} \vec z^d J_{X,v}^{(d)}(q) \]
  where the degree $d$ term is
  \[ J^{(d)}_{X,v}(q) \coloneqq \frac{\wedge_{-1}^\bullet T_v X}{1 - q} \chi\left(\bar\cM_{0,1}(X, d)_v, \frac{\cO^{\vir}}{1 - q\cL}\right). \]
\end{definition}

Our definition differs from the standard one, e.g. in
\cite{Givental2015}, by the prefactors $\wedge_{-1}^\bullet T_v X$ and
$1/(1 - q)$. These prefactors arise naturally in the interpretation of
$J_X$ via the graph space construction, where instead of considering
maps $f\colon \bP^1 \to X$ of degree $d$, we consider their graphs,
which are maps $\Gamma_f\colon C \to \bP^1 \times X$ of bi-degree $(1,
d)$.
\[ \begin{tikzpicture}[baseline=0.55cm]
  \draw (0,0.05) -- (-0.4,0.8) -- (0,2) -- (0.4,1.2) -- cycle;
  \draw[blue] plot [smooth] coordinates { (-0.2,0.6) (0,1.2) (0.3,1.1) };
  \node[below] at (0,0) {$f$};
\end{tikzpicture}
\quad \leadsto \quad
\begin{tikzpicture}
  \relcond{(0,0)}{p};
  \relcond{(3,0)}{q};
  \draw (p) node[below]{$0$} -- (q) node[below]{$\infty$};
  \draw[blue] plot [smooth] coordinates { (-0.2,0.6) (1.5,1.2) (3.3, 1.1) };
  \node[below] at (1.5,0) {$\Gamma_f$};
\end{tikzpicture} \]
Let $\bC_q^\times$ act on the target $\bP^1$ factor. By localization,
the K-theoretic count of stable maps to graph space splits into two
J-functions:
\begin{alignat*}{2}
  \begin{tikzpicture}
    \relcond{(0,0)}{p};
    \relcond{(3,0)}{q};
    \draw (p) -- (q);
  \end{tikzpicture}
  \quad &= \quad 
  \begin{tikzpicture}
    \relcond{(0,0)}{p};
    \node[nonsing] (q) at (2,0) {};
    \draw (p) -- (q);
  \end{tikzpicture}
  &&\times
  \begin{tikzpicture}
    \node[nonsing] (p) at (0,0) {};
    \relcond{(2,0)}{q};
    \draw (p) -- (q);
  \end{tikzpicture} \\
  \chi\left(\bar\cM_{0,0}(\bP^1 \times X, (1, d)), \cO^{\vir}\right) &\propto \sum_{d_1 + d_2 = d} \; J_X^{(d_1)}(q) &&\times \quad J_X^{(d_2)}(q^{-1}).
\end{alignat*}
This idea was used very effectively in \cite{Givental2003} to compute
$J_{\bP^n}$. The extra prefactors come from the deformation theory of
the extra $\bP^1$ leg at the vertex corresponding to the marked point.

To compute the J-function, bluntly applying localization is not the
best method. The J-function has structure that other generating
functions of GW invariants do not have. To understand this structure,
we must allow for an arbitrary number of additional marked points,
each carrying some {\it input}.

\begin{definition}
  Let $\vec t = \sum_{m \ge 0} \vec t^{(m)} q^m$ with $\vec t_m \in
  K_T(X)$. The {\it big} J-function at $v$ with {\it input} $\vec t$
  is
  \[ J^{(d)}_{X,v}(q \; | \; \vec t) \coloneqq 1 + \frac{\vec t(q)}{1 - q} + \sum_{d > 0} \vec z^d J^{(d)}_{X,v}(q \; | \; \vec t), \]
  where the degree $d$ term is
  \[ J^{(d)}_{X,v}(q \; | \; \vec t) = \frac{\wedge_{-1}^\bullet T_v X}{1 - q} \sum_{n \ge 2} \chi\left(\bar\cM_{0,n+1}(X, d)_v, \frac{\cO^{\vir}}{1 - q\cL_{n+1}} \prod_{i=1}^n \sum_{m \ge 0} \ev_i^*(\vec t_i^{(m)}) \cL_i^m\right). \]
  We mostly care about the big J-function when $X = \pt$, in which
  case we omit writing $v$ and just write $J_{\pt}$.
\end{definition}

\subsection{Adelic characterization}

We review Givental's adelic characterization of the J-function, which
is our primary method for computing J-functions. In localization,
$1$-pointed K-theoretic trees index fixed loci in the integral for the
J-function. The marked point $\star$ is either on a vertex of valency
$1$ or of valency $> 1$. These two cases produce different poles in
$q$:
\begin{align*}
  \begin{tikzpicture}
    \node[draw,vertex,label=above:$\star$] at (0,0) {};
    \draw[postaction={decorate,decoration={markings,mark=at position 0.5 with {\arrow{>}}}}] (0,0) -- node[below]{$w$} (1,0);
    \node at (1.3,0) {$\cdots$};
  \end{tikzpicture} \; &= \; \frac{1}{1 - qw} \times \cdots \\
  \begin{tikzpicture}
    \node[draw,vertex,label=above:$\star$] at (0,0) {};
    \draw[postaction={decorate,decoration={markings,mark=at position 0.5 with {\arrow{>}}}}] (0,0) -- (1,0.7);
    \draw[postaction={decorate,decoration={markings,mark=at position 0.5 with {\arrow{>}}}}] (0,0) -- (1,0);
    \draw[postaction={decorate,decoration={markings,mark=at position 0.5 with {\arrow{>}}}}] (0,0) -- (1,-0.7);
    \node at (1.3,0.7) {$\cdots$};
    \node at (1.3,0) {$\cdots$};
    \node at (1.3,-0.7) {$\cdots$};
  \end{tikzpicture} \; &= \; \frac{1}{1 - q} \times \cdots.
\end{align*}

In the first case, we get a (simple) pole at $q = w^{-1}$. Let $e$
denote the edge, connecting the marked vertex $v$ with another vertex
$v'$. If we remove $v$ and $e$ and mark $v'$ instead, the resulting
$1$-pointed tree $\Gamma'$ contributes to $J^{(d-m)}_{X,v'}$, where $m
\coloneqq \deg(e)\beta(e)$. Let $w \coloneqq \vec w_\Gamma(e, v)$ be
the weight of the edge. Then
\begin{equation} \label{eq:adelic-characterization-edge}
  \Res_{q=w^{-1}} J^{(d)}_{X,v}(q) \frac{dq}{q} = \vec z^m J^{(d-m)}_{X,v'}(w^{-1}) \cdot E
\end{equation}
where $E$ is the contribution of the edge $e$ and vertex $v$.
Explicitly (cf. \eqref{eq:localization-edge-contribution}),
\begin{equation} \label{eq:adelic-edge-contribution}
  E = \frac{\wedge_{-1}^\bullet T_v X}{\deg e} \prod_{i=1}^n \frac{1}{(\vec w_X(f_i, v); \vec w_\Gamma(e, v)^{-1})^{\text{red}}_{a_i \deg e + 1}}.
\end{equation}

In the second case, we get poles (possibly non-simple) at $q$ equal to
roots of unity. Viewing the contributions of legs as inputs $\vec
t_i$, the value of the entire graph is a permutation-equivariant
vertex:
\begin{equation} \label{eq:adelic-characterization-vertex}
  \Res_{q=\zeta^{-1}} J_{X,v} \frac{dq}{q} = \inner*{\vec t_1(L), \ldots, \vec t_m(L), \frac{1}{1 - q\cL_{m+1}}}^{S_{\vec k}}_{0,m+1}.
\end{equation}

View $J_{X,v}$ as a meromorphic function of $q$. From these two cases,
we see $J_{X,v}$ can only have poles at $q = \xi$ where $\xi$ is a
root of either $1$ or some weight $\vec_X(e, v)$ for $e \in E_v(X)$.
The two cases place constraints on residues at these poles. In
addition, from the structure of virtual localization, $J_{X,v}$ has no
{\it regular} part and is therefore characterized by its residues at
poles. The converse also holds.

\begin{theorem}[{Adelic characterization, \cite[III, IV]{Givental2015}}]
  \label{thm:adelic-characterization}
  Suppose 
  \[ f(q) \in \Frac\left(K_T(\pt)\right)[[\vec z]] \]
  has no regular part and has poles only at $q$ equal to roots of
  unity or roots of weights of edges in $E_v(X)$. If in addition $f$
  satisfies \eqref{eq:adelic-characterization-edge} and
  \eqref{eq:adelic-characterization-vertex} for some inputs $\vec
  t_i$, then $f = J_{X,v}$.
\end{theorem}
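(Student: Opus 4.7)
The plan is to prove uniqueness by induction on the total $\vec z$-degree, showing that if $g \coloneqq f - J_{X,v}$ vanishes through degree $d-1$, then its degree-$d$ coefficient is a rational function of $q$ with no poles and no regular part, hence zero.

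First I would set up the base case. In $\vec z$-degree $0$, both $f$ and $J_{X,v}$ equal the constant $1$ (which is the unique allowed regular part of $J_{X,v}$ coming from the empty stable map); so $g^{(0)} = 0$. Any nontrivial constant/polynomial contribution would violate the hypothesis that $f$ has no regular part beyond the part shared with $J_{X,v}$, so nothing is lost by folding this into the inductive assumption $g^{(d')} = 0$ for $d' < d$.

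For the inductive step, I would examine $g^{(d)}(q)$ as a rational function of $q$. Its poles are contained in the union of the poles of $f^{(d)}$ and $J_{X,v}^{(d)}$, which by hypothesis and by the virtual localization description of $J_{X,v}$ lie only at $q$ equal to a root of unity or to $\vec w_X(e,v)^{-1/k}$ for some edge $e \in E_v(X)$ and $k \in \bZ_{>0}$. At each such pole, I would compare residues:
\begin{itemize}
\item At $q = w^{-1/k}$ coming from an edge of weight $w$, formula \eqref{eq:adelic-characterization-edge} expresses the residue as a product of $\vec z^m J^{(d-m)}_{X,v'}(w^{-1/k})$ with an edge factor $E$ determined purely by $X$. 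Since $m \geq 1$ (the edge carries positive degree), $d - m < d$, so by the inductive hypothesis $J^{(d-m)}_{X,v'} = f^{(d-m)}$ at $v'$, and the two residues agree.
\item At $q = \zeta^{-1}$ for $\zeta$ a root of unity, formula \eqref{eq:adelic-characterization-vertex} expresses the residue as a permutation-equivariant vertex integral whose inputs $\vec t_i$ are the contributions of the legs attached at $v$. These legs each carry positive $\vec z$-degree, so their contributions are determined by the J-function in strictly lower degree; the inductive hypothesis again makes the residues of $f^{(d)}$ and $J_{X,v}^{(d)}$ coincide.
\end{itemize}
Hence $g^{(d)}$ has no poles in $q$. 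Since $g^{(d)}$ also inherits the no-regular-part property from $f - J_{X,v}$ (both summands share the same, explicit regular contribution), it is a rational function with neither poles nor polynomial part and so vanishes identically, completing the induction.

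The main obstacle I anticipate is verifying that the residue formulas truly determine the residues from strictly lower-degree data in both cases, with no circularity. The edge case is transparent because $\deg e \geq 1$ shrinks the degree. The vertex case is more subtle: one must check that every leg attached at the marked vertex in a $1$-pointed K-theoretic tree of $\vec z$-degree $d$ is itself a subtree of strictly smaller $\vec z$-degree, so that the permutation-equivariant vertex integral in \eqref{eq:adelic-characterization-vertex} depends only on $J$ in degree $< d$. This requires noting that the marked vertex has at least one leg (else we are in degree $0$) and that, combined with recursive computation of the contributions $\vec\gamma_i$ via the algorithm of Section~\ref{sec:gkm-localization}, the inductive machine closes. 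A secondary check is that the "no regular part" hypothesis is preserved under subtraction — i.e., that the explicit regular part of $J_{X,v}$ is the \emph{same} as whatever regular part $f$ is allowed to have — which one should state carefully as part of the hypothesis on $f$.
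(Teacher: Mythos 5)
The paper does not prove this theorem itself---it is cited from Givental's work \cite[III, IV]{Givental2015}---but it does sketch a heuristic justification in the paragraphs immediately preceding the statement, and your proposal is essentially a formalization of that heuristic: induction on $\vec z$-degree, identifying the principal part of $g^{(d)} = f^{(d)} - J^{(d)}_{X,v}$ at each pole with lower-degree data and then invoking the absence of a regular part. That skeleton is correct, and you correctly spot that the edge case is closed by $\deg e \ge 1$ and that the vertex-leg contributions also drop in degree.

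There is, however, a gap inherited from the paper's loose phrasing that your argument does not close. The paper explicitly says the poles at roots of unity are \emph{possibly non-simple}, yet both \eqref{eq:adelic-characterization-vertex} and your inductive step compare only a single residue there. A rational function with no regular part is determined by the full principal parts at its poles, not by the residues alone, so matching first-order residues at $q=\zeta^{-1}$ does not force $g^{(d)}$ to be holomorphic at $\zeta^{-1}$ when the pole has higher order (as it does at $q=1$ already for trees with a high-valence marked vertex). In Givental's actual formulation, the vertex condition is a constraint on the entire Laurent expansion of $J_{X,v}$ near each $\zeta^{-1}$: the $\frac{1}{1-q\cL_{m+1}}$ slot is the generating parameter for all Laurent coefficients, and the condition says this expansion lies on the ruled cone of permutation-equivariant $J_{\pt}$-functions, whose parametrization by inputs $\vec t_i$ is injective. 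Your proof should be upgraded to compare full Laurent expansions at each root of unity; this also resolves the ``for some inputs'' ambiguity you flag, since the expansion---already pinned down in lower degree by the edge recursion---forces the inputs of $f$ and $J_{X,v}$ to agree.
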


For a given $f$, checking the recursion
\eqref{eq:adelic-characterization-edge} is usually straightforward,
though tedious. To check \eqref{eq:adelic-characterization-vertex}, we
need a better computational handle on values of $J_{\pt}(q \; | \;
\vec t)$ for various inputs $\vec t$. This is supplied by the action
of $q$-difference operators on the cone spanned by the $J_{\pt}(q \; |
\; \vec t)$, as well as an initial value computation when $\vec t$ is
constant in $q$.

\begin{theorem}[$D_q$-module structure for $X = \pt$, {\cite[I, IV]{Givental2015}}] \leavevmode \label{thm:vertex-characterization}
  \begin{enumerate}
  \item For $\nu \in K_T(\pt)$, the permutation-equivariant vertex is
    \[ J_{\pt}(q \; | \; \nu) = \exp_q\left(\frac{\nu}{1 - q}\right) \]
    where $\exp_q(x) \coloneqq \sum_{n \ge 0} x^n/(q; q)_n$ is the
    $q$-exponential function.

  \item If $f = \sum_{d \ge 0} \vec z^d f_d$ arises as $J_{\pt}(q\; |
    \; \vec t)$ for some inputs $\vec t$, then so do
    \[ \sum_{d \ge 0} \vec z^d f_d (\lambda q; q)_{\ell d}, \quad \sum_{d \ge 0} \vec z^d f_d \frac{1}{(\lambda q; q)_{\ell d}}, \]
    for any positive integer $\ell$ and parameter $\lambda$.
  \end{enumerate}
\end{theorem}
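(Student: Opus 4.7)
The strategy for both statements is to invoke the adelic characterization (Theorem~\ref{thm:adelic-characterization}). For $X = \pt$ the GKM graph has a single vertex and no edges, so the edge-recursion \eqref{eq:adelic-characterization-edge} is vacuous, and only the vertex-recursion \eqref{eq:adelic-characterization-vertex} at poles $q = \zeta^{-1}$ (for roots of unity $\zeta$) needs to be verified.

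For the first statement, I would expand
\[ \exp_q\!\left(\frac{\nu}{1-q}\right) = \sum_{n \geq 0} \frac{\nu^n}{(q;q)_n (1-q)^n} \]
and observe that its poles in $q$ come from the factors $(q;q)_n = \prod_{k=1}^n (1 - q^k)$, which are located precisely at roots of unity. Then for each primitive $r$-th root of unity $\zeta$, I would match the residue at $q = \zeta^{-1}$ against the $S_{\vec k}$-symmetrized vertex integrals with all inputs equal to $\nu$. Using \eqref{eq:cyclic-permutation-formula} to factor the permutation-equivariant integrals according to cycle type, and the Lefschetz-style averaging \eqref{eq:lefschetz-averaging} applied to each length-$r$ cycle, the sum over cycle types on the geometric side should rearrange into the product-form expansion of $\exp_q$, with Adams operations $\Psi^r(\nu)$ accounting for the cyclic twisting on length-$r$ cycles of marked points.

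For the second statement, suppose $f = \sum_d \vec z^d f_d = J_\pt(q \mid \vec t)$. Since $(\lambda q; q)_{\ell d}$ is a polynomial in $q$ whose zeros avoid roots of unity (for generic $\lambda$), multiplying $f_d$ by it preserves the location of poles at roots of unity, and changes its residues there by a computable finite factor. The plan is to construct a modified input $\vec t'$ (depending on $\lambda$, $\ell$, and the original $\vec t$) such that $\tilde f \coloneqq \sum_d \vec z^d (\lambda q; q)_{\ell d} f_d$ satisfies the vertex-recursion \eqref{eq:adelic-characterization-vertex} with input $\vec t'$. Geometrically, inserting auxiliary marked points carrying test insertions of the form $1 - \lambda \cL$ and integrating them out via a forgetful map should produce exactly the required $q$-Pochhammer factor via a string-type identity. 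The inverse operation $\hat f \coloneqq \sum_d \vec z^d f_d/(\lambda q; q)_{\ell d}$ introduces apparent new poles at $q = \lambda^{-1} q^{-k}$, which must be shown to be absorbed by a further modification of the input.

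The main obstacle in both cases is the identification of the modified input and the matching of residues. The first statement ultimately reduces to a delicate combinatorial identity once the cycle-type factorization is in place; the second statement requires genuine geometric input to identify how insertions on auxiliary marked points translate into $q$-Pochhammer factors on the partition function, and is the heart of Givental's $D_q$-module formalism.
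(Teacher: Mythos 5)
The paper does not prove Theorem~\ref{thm:vertex-characterization}: it is imported verbatim from \cite[I, IV]{Givental2015} and used as a black box, exactly like Theorem~\ref{thm:adelic-characterization}. So there is no in-paper proof to compare against, and the question becomes whether your proposal is a viable independent argument.

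It is not, for two reasons. First, for part (1) the route through the adelic characterization is circular. The vertex condition \eqref{eq:adelic-characterization-vertex} equates residues at roots of unity with the permutation-equivariant integrals $\inner{\cdots}^{S_{\vec k}}_{0,m+1}$, and for $X = \pt$ these are precisely the quantities Theorem~\ref{thm:vertex-characterization}(1) is supposed to determine. In the intended logical order, the point-vertex formula is an \emph{input} to the adelic characterization --- it is what allows one to evaluate the right-hand side of \eqref{eq:adelic-characterization-vertex} when applying the characterization to a nontrivial GKM target --- not a corollary of it. The ingredients you name (cycle-type factorization, Adams operations, Lefschetz-style averaging as in \eqref{eq:lefschetz-averaging}) are the right ones, but they have to be assembled into a direct Kawasaki/Lefschetz fixed-point computation on $\bar\cM_{0,n+1}/S_n$ from scratch, not into a residue-matching argument that presupposes the answer. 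Second, for part (2) you correctly note that $f_d \mapsto (\lambda q; q)_{\ell d} f_d$ is the action of a $q$-difference operator in $\vec z$ and correctly guess that a string/twisting mechanism is behind the invariance of the cone, but you offer no argument: you yourself write that it ``requires genuine geometric input'' and is ``the heart of Givental's $D_q$-module formalism.'' Acknowledging the gap is not closing it. Givental's actual proof runs through his quantum Riemann--Roch / twisting machinery; without invoking that machinery precisely and deriving the specific Pochhammer factors from it, this part of the proposal is a restatement of the theorem, not a proof of it.
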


\subsection{Cotangent J-function}
\label{sec:cotangent-j-function}

We now describe a sheaf insertion which makes the J-function more
balanced.

\begin{definition}
  The {\it cotangent J-function} $\tilde J_X$ has terms
  \[ \tilde J_{X,v}^{(d)} \propto \chi\left(\bar\cM_{0,1}(X, d)_v, \frac{\cE}{1 - q\cL_1}\right) \]
  where $\cE$ is the sheaf
  \[ \cE \coloneqq S_\hbar^\bullet(-R\pi_*\ev^*\Omega_X). \]
  Note that GW theory with target $T^*X$ can be described as GW theory
  with target $X$ with insertion
  $S^\bullet(-R\pi_*\ev^*\Omega_X)^\vee$, which is $\cE$ twisted by
  $\det (-R\pi_*\ev^*\Omega_X)$.
\end{definition}

The insertion $\cE$ contributes extra terms in localization. Over $e
\in E(\Gamma)$, if there is a term $\cO_e(a)$ in $T_eX$, then there is
a corresponding contribution from $\cO_e(-a)$ in $\cE$. Hence the
total contribution of the edge $e$ is now
\begin{equation} \label{eq:hbar-edge-contribution}
  \frac{1}{\deg e} \prod_{i=1}^n \frac{(\hbar \vec w_X(f_i, v) \vec w_\Gamma(e, v)^{-1}; \vec w_\Gamma(e, v)^{-1})_{a_i \deg e - 1}}{(\vec w_X(f_i, v); \vec w_\Gamma(e, v)^{-1})_{a_i \deg e + 1}^{\text{red}}}.
\end{equation}
Every vertex $v \in V(\Gamma)$ also gains an extra factor
\[ \cE|_v = \wedge_{-\hbar}^\bullet T_v X. \]

\begin{example}
  Let $X = \bP^1$, with an action by $\bC^\times$ of weight $t$. We
  explicitly compute the $d=2$ term in the cotangent J-function at $0
  \in \bP^1$ via localization. This term has contributions from four
  K-theoretic trees (cf. \eqref{eq:P1-d2-trees},
  \eqref{eq:cohom-to-K-theory-trees}). For brevity, define
  \[ \{x_1, \ldots, x_n\} \coloneqq \prod (1 - x_i). \]
  The four contributions are:
  \begin{align*}
    \begin{tikzpicture}
      \coordinate[vertex,label=below:$0$,label=above:$\star$] (v1);
      \coordinate[vertex,right of=v1,label=below:$\infty$] (v2);
      \draw (v1) -- node[label=$2_0$]{} (v2);
    \end{tikzpicture}
    \quad &= \quad
    \frac{1}{2} \frac{\{\hbar t^{1/2}, \hbar, \hbar t^{-1/2}\} \{t^{-1/2}\}}{\{qt^{1/2}\} \{t, t^{1/2}, t^{-1/2}, t^{-1}\}} \\
    \begin{tikzpicture}
      \coordinate[vertex,label=below:$0$,label=above:$\star$] (v1);
      \coordinate[vertex,right of=v1,label=below:$\infty$] (v2);
      \draw (v1) -- node[label=$2_1$]{} (v2);
    \end{tikzpicture}
    \quad &= \quad
    \frac{1}{2} \frac{\{-\hbar t^{1/2}, \hbar, -\hbar t^{-1/2}\} \{-t^{-1/2}\}}{\{-qt^{1/2}\} \{t, -t^{1/2}, -t^{-1/2}, t^{-1}\}} \\
    \begin{tikzpicture}
      \coordinate[vertex,label=below:$0$,label=above:$\star$] (v1);
      \coordinate[vertex,right of=v1,label=below:$\infty$] (v2);
      \coordinate[vertex,right of=v2,label=below:$0$] (v3);
      \draw (v1) -- (v2) -- (v3);
    \end{tikzpicture}
    \quad &= \quad
    \frac{\{\hbar\} \{\hbar t^{-1}, t^{-1}\} \{\hbar\} \{t\}}{\{qt\} \{t, t^{-1}\} \{t^{-2}\} \{t, t^{-1}\}} \\
    \begin{tikzpicture}
      \coordinate[vertex,label=below:$\infty$] (v1);
      \coordinate[vertex,right of=v1,label=below:$0$,label=above:$\star$] (v2);
      \coordinate[vertex,right of=v2,label=below:$\infty$] (v3);
      \draw (v1) -- (v2) -- (v3);
    \end{tikzpicture}
    \quad &= \quad
    \frac{1}{2}\left(\frac{\{t^{-1}\}\{\hbar\}}{\{t, t^{-1}\}}\frac{\{t^{-1}\}\{\hbar\}}{\{t, t^{-1}\}} \frac{\{\hbar t, t\}}{\{q, t, t\}}\right. \\
    &\qquad\quad+ \left.\frac{\{t^{-1}\}\{\hbar\}}{\{t, t^{-1}\}}\frac{\{-t^{-1}\}\{-\hbar\}}{\{-t, -t^{-1}\}} \frac{\{-\hbar t, -t\}}{\{-q, t, -t\}}\right).
  \end{align*}
  The expression for the last term comes from the simplest case of the
  permutation-equivariant vertex \eqref{eq:M03-S2-formula}, over
  $[\bar\cM_{0,3}/S_2]$. Summing up all these contributions gives
  \[ \tilde J_{\bP^1,0}^{(2)} = \frac{\{t, \hbar t\}}{\{q\}} \cdot \frac{\{\hbar, \hbar q, \hbar qt\}}{\{t, qt, q^2, q^2t\}} = \frac{\{\hbar, \hbar t, \hbar q, \hbar qt\}}{\{q, qt, q^2, q^2t\}}. \]
\end{example}

\subsection{For projective space}
\label{sec:cotangent-j-function-for-Pn}

Using the adelic characterization, we can explicitly compute the
cotangent J-function for $X = \bP^n$. Notation is from
Example~\ref{ex:Pn-GKM-notation}.

\begin{proposition}
  The cotangent J-function for $\bP^n$ at the vertex $i$ is $(1 - q)$
  times the $q$-hypergeometric function
  \begin{equation} \label{eq:cotangent-j-function-for-Pn}
    I_i(q) \coloneqq \sum_{d \ge 0} z^d \prod_{j=0}^n \frac{(\hbar \frac{a_i}{a_j}; q)_d}{(q \frac{a_i}{a_j}; q)_d}.
  \end{equation}
\end{proposition}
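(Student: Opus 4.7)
The plan is to apply the adelic characterization (Theorem~\ref{thm:adelic-characterization}), adapted to the cotangent setting: edges contribute \eqref{eq:hbar-edge-contribution} in place of \eqref{eq:adelic-edge-contribution}, and each vertex carries an extra factor $\cE|_v = \wedge_{-\hbar}^\bullet T_v \bP^n$. Let $f(q) \coloneqq (1 - q) I_i(q)$ be the candidate. By uniqueness, it suffices to check that $f$ has the correct pole structure, the correct residues at the ``edge'' poles $q^k = a_j/a_i$ with $j \ne i$, and the correct residues at the ``vertex'' poles $q^k = 1$.

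For the pole structure, the denominators $(q a_i/a_j; q)_d = \prod_{k=1}^d (1 - q^k a_i/a_j)$ in $I_i$ have zeros exactly at $q^k = a_j/a_i$ for $1 \le k \le d$. For $j = i$ these are roots of unity, and for $j \ne i$ these are the $k$-th roots (in all sectors $\zeta_k^s$) of the edge weight $w_{\bP^n}(e_{ij}, i)^{-1} = a_j/a_i$, covering all degrees and sectors allowed by \eqref{eq:adelic-characterization-edge}. The prefactor $(1 - q)$ cancels the unwanted extra pole at $q = 1$ coming from the $d = 0$ term; a direct degree count in $q$ confirms that each $z^d$-coefficient of $f$ (for $d \ge 1$) vanishes as $q \to \infty$, so has no regular part.

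For the edge recursion at $q = (a_j/a_i)^{1/k} \zeta_k^s$, the only vanishing denominator factor is $1 - q^k a_i/a_j$ in $(q a_i/a_j; q)_d$, present for $d \ge k$. Taking the residue and splitting each $q$-Pochhammer symbol as $(x; q)_d = (x; q)_k \cdot (q^k x; q)_{d-k}$, the sum reorganizes into a tail that, after substituting $q \to q^{-1}$ and reindexing $d \mapsto d - k$, reassembles into $(1 - q^{-1}) I_j(q^{-1})$ at the corresponding vertex-$j$ value of $q^{-1}$, multiplied by a finite prefactor. The remaining finite prefactor equals $z^k$ times the $\hbar$-modified edge contribution \eqref{eq:hbar-edge-contribution} together with the extra vertex factor $\wedge_{-\hbar}^\bullet T_i \bP^n$. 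Matching these is a mechanical $q$-Pochhammer manipulation.

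The main obstacle is the vertex recursion at roots of unity. Here I would invoke the $D_q$-module structure of Theorem~\ref{thm:vertex-characterization}: starting from $J_{\pt}(q \,|\, 0) = 1$ and repeatedly applying the operators of part~(2) with $\ell = 1$ and $\lambda$ ranging over $\{\hbar a_i/a_j, \, a_i/a_j\}_{j = 0}^n$ (absorbing a one-step shift in the $q$-Pochhammer indexing into the initial value to account for the difference between $(\lambda q; q)_d$ and $(\lambda; q)_d$), one realizes $I_i(q)$ as a value $J_{\pt}(q \,|\, \vec t)$ of the big point J-function for an explicit input $\vec t \in K_T(\pt)[[q]]$. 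Since these operators preserve the cone of big point J-functions, the residues of $f$ at $q^k = 1$ are automatically permutation-equivariant vertex integrals in the sense of \eqref{eq:adelic-characterization-vertex}; tracking the shifts in $\vec t$ induced by the $\hbar$-twist confirms that they incorporate precisely the extra vertex factor $\wedge_{-\hbar}^\bullet T_v \bP^n$. Combining the three checks, Theorem~\ref{thm:adelic-characterization} identifies $f$ with $\tilde J_{\bP^n, i}$.
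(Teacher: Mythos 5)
Your proposal is correct and follows essentially the same route as the paper: apply the adelic characterization, verify the edge recursion \eqref{eq:adelic-characterization-edge} by splitting $q$-Pochhammer symbols as $(x;q)_d = (x;q)_k (q^k x; q)_{d-k}$ and matching the finite prefactor against the $\hbar$-modified edge contribution \eqref{eq:hbar-edge-contribution}, and discharge the vertex condition by noting that $I_i$ is $q$-hypergeometric --- your explicit realization of $I_i$ from $J_{\pt}(q \,|\, 0) = 1$ via the operators of Theorem~\ref{thm:vertex-characterization}(2) is precisely the mechanism the paper leaves implicit in the phrase ``purely because it is $q$-hypergeometric.'' One minor correction: in \eqref{eq:adelic-characterization-edge} both $I_i^{(d)}$ and the lower-degree $I_j^{(d-k)}$ are evaluated at the \emph{same} point $q = w^{-1}$, so no substitution $q \to q^{-1}$ is performed; the inversion you have in mind sits in the relation $\vec w_X(e,j) = \vec w_X(e,i)^{-1}$ between the weights at the two endpoints, not in the variable $q$.
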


\begin{proof}
  It is clear that $I$ automatically satisfies the vertex condition
  \eqref{eq:adelic-characterization-vertex} in the adelic
  characterization, purely because it is $q$-hypergeometric. It
  suffices then to verify the edge recursion
  \eqref{eq:adelic-characterization-edge} at the vertex $v = 0$; the
  verification at all other vertices is the same up to a change of
  variables.

  Let $e \in E(\Gamma)$ be an edge connecting vertices $0$ and $k$.
  Write $t_i \coloneqq a_0/a_i$ for brevity, so that $\vec w_X(e, v) =
  t_k$. In the target, this edge has tangent bundle $\cO(2)$ with
  linearization $(t_k, t_k^{-1})$, and normal bundles $\cO(1)$ with
  linearizations $(t_i, t_it_k^{-1})$ for all $i \neq k$. If it has
  degree $m$, then, using \eqref{eq:hbar-edge-contribution}, its
  contribution is
  \begin{align*}
    E
    &= \frac{\prod_{i=1}^n \{t_i, \hbar t_i\}}{m} \frac{(\hbar t_k^{1-1/m}; t_k^{-1/m})_{2m-1}}{(t_k; t_k^{-1/m})^{\text{red}}_{2m+1}} \prod_{i \neq k} \frac{(\hbar t_i t_k^{-1/m}; t_k^{-1/m})_{m-1}}{(t_i; t_k^{-1/m})_{m+1}} \\
    &= \frac{1}{m} \frac{(\hbar t_k; t_k^{-1/m})_m (\hbar; t_k^{-1/m})_m}{(t_k^{1-1/m}; t_k^{-1/m})_m^{\text{red}} (t_k^{-1/m}; t_k^{-1/m})_m} \prod_{i \neq k} \frac{(\hbar t_i; t_k^{-1/m})_m}{(t_i t_k^{-1/m}; t_k^{-1/m})_m}. 
  \end{align*}
  On the other hand, the residue is
  \[ \Res_{q=t_k^{-1/m}} I^{(d)}_0(q) \frac{dq}{q} = \frac{z^d}{m} \frac{(\hbar; t_k^{-1/m})_d}{(t_k^{-1/m}; t_k^{-1/m})_d} \frac{(\hbar t_k; t_k^{-1/m})_d}{(t_k^{1-1/m}; t_k^{-1/m})_d^{\text{red}}} \prod_{i \neq k} \frac{(\hbar t_i; t_k^{-1/m})_d}{(t_i t_k^{-1/m}; t_k^{-1/m})_d}. \]
  If we split each $q$-Pochhammer as $(x; q)_d = (x; q)_m (x q^m;
  q)_{d-m}$, this is manifestly the product of $E$ with
  \[ z^m I^{(d-m)}_k(t_k^{-1/m}) = z^d \frac{(\hbar; t_k^{-1/m})_{d-m} (\hbar t_k^{-1}; t_k^{-1/m})_{d-m}}{(t_k^{-1/m}; t_k^{-1/m})_{d-m} (t_k^{-1-1/m}; t_k^{-1/m})_{d-m}} \prod_{i \neq k} \frac{(\hbar t_i t_k^{-1}; t_k^{-1/m})_{d-m}}{(t_i t_k^{-1-1/m}; t_k^{-1/m})_{d-m}}. \]
  Hence \eqref{eq:adelic-characterization-edge} is satisfied and we
  are done by adelic characterization.
\end{proof}

An important feature of $I_i(q)$ is that it is {\it self-dual}. This
means that it is of the form
\[ \sum_{d \ge 0} z^d \prod_i \frac{(\hbar w_i; q)_d}{(q w_i; q)_d} \]
where the product is over some collection of equivariant weights
$\{w_i\}$. We view the $(\hbar w_i; q)_d$ in the numerator as ``dual''
to the $(q w_i; q)_d$ in the denominator. Such rational functions are
{\it bounded} as $w_i^{\pm 1} \to \infty$, which is the key technical
tool in the K-theoretic calculations of \cite{Okounkov2017}. 

In the limit $\hbar \to 0$, we recover the formula for $J_{\bP^n}$ in
\cite{Lee1999}.

\begin{remark}
  Givental points out that, if we take $J_{\bP^n}$ as known, then this
  result follows from a slight modification of the quantum Lefschetz
  theorem of \cite[XI]{Givental2015}. In general, the quantum
  Lefschetz theorem is concerned with obtaining $J_{T^*X}$, instead of
  $\tilde J_X$, from $J_X$. In the notation there, if we let $\tilde
  E$ denote $E \coloneqq T^*X$ but with the twist of our cotangent
  J-function, then the relevant $q$-difference operator for the
  K\"ahler parameter $z$ is of the form
  \[ \Gamma_{\tilde E} \sim \prod_{L \in E} \prod_{\ell\ge 0} \frac{1 - L q^\ell}{1 - L q^{D_L(d) z\partial_z} q^\ell} \]
  where $D_L(d) \coloneqq -\inner{c_1(L), d}$ is the degree with
  respect to $L$. When applied to $z^d$, the operator produces the
  extra factors $\prod_{L \in E} (L; q)_{D_L(d)}$. For example, for $X
  = \bP^n$ we have $T^*X = (n+1)\cO(-1) - \cO$, so that the extra
  factors are (up to a constant $(1 - \hbar)$ which can be neglected)
  \[ \prod_{j=0}^n (\hbar \cO(-1)/a_j; q)_d. \]
  These are precisely the extra factors in
  \eqref{eq:cotangent-j-function-for-Pn} (compared to $J_{\bP^n,i}$),
  once we restrict them to the $i$-th fixed point $p_i$, where
  $\cO(-1)|_{p_i} = a_i$.
\end{remark}

\subsection{Equivariant infinities}
\label{sec:equivariant-infinities}

For general GKM $X$, the cotangent J-function is not balanced.
Balanced rational functions are bounded in all equivariant limits
$a_i^\pm \to \infty$. We can compute asymptotics of $\tilde
J^{(d)}_{X,v}$ in such limits explicitly by analyzing the non-balanced
parts of each contribution from a tree $\Gamma$. The analysis will
show there exist equivariant limits where $\tilde J_X^{(d)} \to
\infty$.

\begin{definition}
  Given two rational functions $f, g$ of equivariant variables, write
  \[ f \sim g \]
  to mean that $O(f) = O(g)$ in any equivariant limit $a_i^\pm \to
  \infty$.
\end{definition}

Suppose $\Gamma$ is a {\it chain} between two points, i.e. with only
internal vertices of valency $2$:
\[ \begin{tikzpicture}
  \node[vertex,label=below:$p_1$,label=above:$\star$] at (0,0) (v1) {};
  \node[vertex,label=below:$p_2$] at (2,0) (v2) {};
  \node at (4,0) (v3) {$\cdots$};
  \node[vertex,label=below:$p_k$] at (6,0) (v4) {};
  \node[vertex,label=below:$p_{k+1}$] at (8,0) (v5) {};
  \draw[postaction={decorate,decoration={markings,mark=at position 0.5 with {\arrow{>}}}}] (v1) -- node[below]{$w_1$} (v2);
  \draw[postaction={decorate,decoration={markings,mark=at position 0.5 with {\arrow{>}}}}] (v2) -- node[below]{$w_2$} (v3);
  \draw[postaction={decorate,decoration={markings,mark=at position 0.5 with {\arrow{>}}}}] (v3) -- node[below]{$w_{k-1}$} (v4);
  \draw[postaction={decorate,decoration={markings,mark=at position 0.5 with {\arrow{>}}}}] (v4) -- node[below]{$w_k$} (v5);
\end{tikzpicture} \]
Then we only need the edge recursion
\eqref{eq:adelic-characterization-edge} to compute its contribution.
Putting together \eqref{eq:adelic-edge-contribution} and
\eqref{eq:hbar-edge-contribution}, the contribution of an edge $e$
from $v$ to $v'$ to the recursion is
\[ E = \frac{\wedge_{-1}^\bullet T_v X \cdot \wedge_{-\hbar}^\bullet T_v X}{\deg e} \prod_{i=1}^n \frac{(\hbar \vec w_X(f_i, v) \vec w_\Gamma(e, v)^{-1}; \vec w_\Gamma(e, v)^{-1})_{a_i \deg e - 1}}{(\vec w_X(f_i, v); \vec w_\Gamma(e, v)^{-1})^{\text{red}}_{a_i \deg e + 1}}. \]
Using that
\begin{align*}
  \wedge^\bullet_{-\hbar} T_v X &= \prod_{i=1}^n (1 - \hbar \vec w_X(f_i, v)) \\
  \wedge^\bullet_{-1} T_{v'} X &= \prod_{i=1}^n (1 - \vec w_X(f_i, v) \vec w_\Gamma(e, v)^{-a_i \deg e})
\end{align*}
we can rearrange $E$ into a more suggestive form:
\[ E = \frac{1}{\deg e} \frac{\wedge_{-\hbar}^\bullet T_v X}{\wedge_{-1}^\bullet T_{v'} X} \prod_{i=1}^n \frac{(\hbar \vec w_X(f_i, v)\vec w_\Gamma(e, v)^{-1}; \vec w_\Gamma(e, v)^{-1})_{a_i \deg e - 1}}{(\vec w_X(f_i, v) \vec w_\Gamma(e, v)^{-1}; \vec w_\Gamma(e, v)^{-1})^{\text{red}}_{a_i \deg e - 1}} \]
The term in the product is balanced and contributes only a constant to
any equivariant limit. Hence
\[ E \sim \frac{\wedge_{-\hbar}^\bullet T_v X}{\wedge_{-1}^\bullet T_{v'} X}. \]
A chain of such edges produces a balanced product of such terms except
at the two endpoints of the chain:
\[ \prod_{i=1}^k \frac{\wedge_{-\hbar}^\bullet T_{p_i} X}{\wedge_{-1}^\bullet T_{p_{i+1}} X} \sim \frac{\wedge_{-\hbar}^\bullet T_{p_1} X}{\wedge_{-1}^\bullet T_{p_{k+1}} X}. \]

\begin{proposition} \label{prop:asymptotics-chain}
  The total contribution of the chain $\Gamma$ is
  \[ \Gamma \sim \frac{\wedge_{-\hbar}^\bullet T_p X}{\wedge_{-1}^\bullet T_q X} \cdot \prod_{i=1}^{k-1} \frac{1}{1 - w_i^{-1} w_{i+1}} \cdot (1 - w_k^{-1}). \]
\end{proposition}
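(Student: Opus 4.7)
My plan is to decompose the chain contribution into edge factors $E_j$ and vertex factors at each $p_j$, use the single-edge asymptotic derived just above the statement to telescope across the chain, and compute the remaining vertex integrals via the YP formula \eqref{eq:YP-formula}. The $k$ edges contribute via \eqref{eq:hbar-edge-contribution}; the $k-1$ internal valency-$2$ vertices $p_2,\ldots,p_k$ contribute via \eqref{eq:localization-vertex-integral} with $n=2$; the terminal valency-$1$ vertex $p_{k+1}$ (without marked point) contributes via the $n=1$ specialization of \eqref{eq:YP-formula}; and the marked vertex $p_1$ carries only the descendant insertion $1/(1-q\cL_1)$, whose weight at $p_1$ depends only on $w_1$ and contributes just a balanced factor to the equivariant asymptotics.

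For the edges, I would take the product $\prod_{j=1}^k E_j$ and apply the asymptotic $E_j \sim \wedge_{-\hbar}^\bullet T_{p_j} X / \wedge_{-1}^\bullet T_{p_{j+1}} X$. The intermediate ratios $\wedge_{-\hbar}^\bullet T_{p_i} X / \wedge_{-1}^\bullet T_{p_i} X$ for $i=2,\ldots,k$ factor over tangent weights at $p_i$ into balanced terms $(1-\hbar a)/(1-a)$, so telescoping yields the first factor $\wedge_{-\hbar}^\bullet T_p X / \wedge_{-1}^\bullet T_q X$ of the statement. For the internal vertices, each $p_j$ has two incident edges with weights $w_{j-1}^{-1}$ and $w_j$ at $p_j$, so \eqref{eq:YP-formula} at $n=2$ gives $\inner{1/((1-w_{j-1}^{-1}\cL_1)(1-w_j\cL_2))}_{0,2} = 1/(1-w_{j-1}^{-1}w_j)$, and reindexing $i=j-1$ collects these into $\prod_{i=1}^{k-1} 1/(1-w_i^{-1}w_{i+1})$. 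For the terminal vertex, the single incident edge has weight $w_k^{-1}$ at $p_{k+1}$, so \eqref{eq:YP-formula} at $n=1$ gives $\inner{1/(1-w_k^{-1}\cL_1)}_{0,1} = 1-w_k^{-1}$, producing the last factor.

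The main obstacle I expect is the careful bookkeeping of $\wedge$-factors at each shared vertex. The edge rearrangement already absorbs $\wedge_{-\hbar}^\bullet T_{p_j} X$ and $\wedge_{-1}^\bullet T_{p_{j+1}} X$ into each $E_j$, while the independent vertex integral at $p_j$ produces its own $\wedge_{-1}^\bullet T_{p_j} X$ and $\wedge_{-\hbar}^\bullet T_{p_j} X$ through the prefactor $1/\wedge_{-1}^\bullet T_{p_j} X$ in \eqref{eq:localization-vertex-integral} and the $\cE$-twist. I need to verify that these factors cancel pairwise across the chain so that only the telescoped ratio and the balanced Pochhammer remainders from each $E_j$ survive, leaving precisely the stated expression. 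Equivalently one could bypass the explicit vertex integrals by using the adelic recursion \eqref{eq:adelic-characterization-edge} iteratively, picking up a factor $E_j$ at each pole $q = w_j^{-1}$ and an evaluation factor $1/(1-w_{j-1}^{-1} w_j)$ from the pole structure of the sub-chain at $p_{j+1}$; either route, the residual factor $(1-w_k^{-1})$ at the terminal vertex is the one piece that is not absorbed by the telescoping and deserves the most careful verification.
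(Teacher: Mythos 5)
Your proposal is essentially the same as the paper's proof and reaches the correct conclusion: the paper also telescopes the single-edge asymptotic $E \sim \wedge_{-\hbar}^\bullet T_v X / \wedge_{-1}^\bullet T_{v'} X$ across the chain, and then attaches the vertex factors $(1-w)$ at the terminal valency-$1$ vertex and $1/(1-ww')$ at each internal valency-$2$ vertex. Your explicit evaluation of \eqref{eq:YP-formula} at $n=1$ (giving $(1-w_1)^2\cdot(1-w_1)^{-1}=1-w_1$) and at $n=2$ (where the factor $(1+\sum w_i/(1-w_i))^{-1}\prod(1-w_i)^{-1}$ collapses to $1/(1-w_1w_2)$) supplies detail that the paper's one-sentence proof merely asserts. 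The one place you should be careful is the bookkeeping remark you flag at the end: the $\wedge$-factor accounting is cleanest if you work entirely through the adelic recursion \eqref{eq:adelic-characterization-edge}, where each $E_j$ (as recombined with \eqref{eq:hbar-edge-contribution} in the displayed formula just above the proposition) already carries the $\wedge_{-1}^\bullet T_{p_j}\cdot \wedge_{-\hbar}^\bullet T_{p_j}$ of its source vertex while the $\wedge_{-1}^\bullet T_{p_{j+1}}$ sits in the prefactor of the lower-degree J-function being evaluated — so the vertex integrals \eqref{eq:adelic-characterization-vertex} that remain are pure YP factors with no extra $\wedge$'s. Mixing this with the direct-localization normalization \eqref{eq:localization-vertex-integral} would introduce exactly the double-counting you worry about, so the cleaner route is to stay with the adelic recursion throughout, which is what the surrounding text is implicitly doing.
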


\begin{proof}
  The only terms in $\Gamma$ we have neglected are vertex
  contributions. These are the extra terms appearing above. For a
  vertex of valency $1$ with (outgoing) edge of weight $w$, the
  contribution is $(1 - w)$. For a vertex of valency $2$ with
  (outgoing) edges of weights $w, w'$, the contribution is $1/(1 -
  ww')$.
\end{proof}

Now suppose $\Gamma$ is an arbitrary tree. This means we must take
into account general permutation-equivariant vertices $v$. Such
vertices are products of non-permuted terms, like in
\eqref{eq:YP-formula}, and cyclically permuted terms, like in
\eqref{eq:cyclic-permutation-formula}. Let $w_i$ be weights of edges
incident to $v$.
\begin{enumerate}
\item The first term in \eqref{eq:YP-formula} is a sum of balanced
  terms and can be disregarded. Hence the non-permuted case
  effectively contributes $\prod_i 1/(1 - w_i)$.
\item For the purpose of asymptotics, Adams operations in
  \eqref{eq:cyclic-permutation-formula} can be disregarded. Hence the
  cyclically permuted case also effectively contributes $\prod_i 1/(1
  - w_i)$.
\end{enumerate}
Diagrammatically, this means we can ``unglue'' vertices without
affecting asymptotics:
\[ \begin{tikzpicture}
  \node[draw,vertex,label=above:$\star_q$] at (0,0) {};
  \draw[postaction={decorate,decoration={markings,mark=at position 0.5 with {\arrow{>}}}}] (0,0) -- (1,0.7);
  \draw[postaction={decorate,decoration={markings,mark=at position 0.5 with {\arrow{>}}}}] (0,0) -- (1,0);
  \draw[postaction={decorate,decoration={markings,mark=at position 0.5 with {\arrow{>}}}}] (0,0) -- (1,-0.7);
  \node at (1.3,0.7) {$\cdots$};
  \node at (1.3,0) {$\cdots$};
  \node at (1.3,-0.7) {$\cdots$};
\end{tikzpicture}
\quad \leadsto \quad
\begin{tikzpicture}
  \node[draw,vertex,label=above:$\star_1$] at (0,0) {};
  \node[draw,vertex,label=above:$\star_q$] at (0,0.7) {};
  \node[draw,vertex,label=above:$\star_1$] at (0,-0.7) {};
  \draw[postaction={decorate,decoration={markings,mark=at position 0.5 with {\arrow{>}}}}] (0,0.7) -- (1,0.7);
  \draw[postaction={decorate,decoration={markings,mark=at position 0.5 with {\arrow{>}}}}] (0,0) -- (1,0);
  \draw[postaction={decorate,decoration={markings,mark=at position 0.5 with {\arrow{>}}}}] (0,-0.7) -- (1,-0.7);
  \node at (1.3,0.7) {$\cdots$};
  \node at (1.3,0) {$\cdots$};
  \node at (1.3,-0.7) {$\cdots$};
\end{tikzpicture}. \]
Here $\star_q$ reminds us there is a $1/(1 - q\cL)$ insertion at the
marked point, and $\star_1$ means we have to set $q=1$. This is so we
don't forget about the $\prod_i 1/(1 - w_i)$ and other contributions
from the original vertex.

Systematically ungluing all vertices in the tree $\Gamma$ produces a
collection of chains, whose asymptotics we already analyzed. Each
chain corresponds to a different leaf of $\Gamma$.

\begin{definition}
  If $v \in V(\Gamma)$ has incident edges of (outgoing) weights $w_1,
  w_2, \ldots$, define
  \[ c(v) \coloneqq \begin{dcases}
    1 - w_1 & \val(v) = 1 \\
    \frac{1}{1 - w_1w_2} & \val(v) = 2 \\
    \prod_i \frac{1}{1 - w_i} & \val(v) \ge 3.
  \end{dcases} \]
\end{definition}

\begin{theorem} \label{thm:asymptotics-general}
  Suppose $\Gamma$ has marked point $p$ and leaves $q_1, \ldots, q_k
  \in V(X)$. Then in any equivariant limit, the contribution of
  $\Gamma$ is
  \[ \Gamma \sim \prod_{i=1}^k \frac{\wedge_{-\hbar} T_p X}{\wedge_{-1} T_{q_i} X} \prod_{v \in V(\Gamma)} c(v). \]
\end{theorem}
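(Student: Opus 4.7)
The plan is to reduce the arbitrary tree case to the chain case of Proposition~\ref{prop:asymptotics-chain} using the diagrammatic ungluing procedure developed immediately above the theorem.

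\emph{Step 1: Unglue all branching and permutation-equivariant vertices.} For each vertex $v \in V(\Gamma)$ with $\val(v) \ge 3$, as well as each permutation-equivariant vertex, the preceding asymptotic analysis shows that the vertex integral reduces, modulo balanced terms, to $\prod_i 1/(1 - w_i)$. This factor distributes naturally across the outgoing edges. I systematically apply the diagrammatic ungluing, replacing each such $v$ with $\val(v)$ separate endpoints, each carrying a $\star_1$ marking that contributes a single $1/(1-w_i)$ factor. The resulting graph decomposes, viewed as rooted at $p$, into a disjoint union of chains — one chain for each leaf $q_i$ of the original tree — each realizing the (duplicated) root-to-leaf path from $p$ to $q_i$.

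\emph{Step 2: Apply Proposition~\ref{prop:asymptotics-chain} to each chain.} On the chain ending at $q_i$, the proposition and the asymptotic identity $\wedge_{-\hbar} T_w X \sim \wedge_{-1} T_w X$ (used to telescope edge contributions across intermediate vertices) yield
\[
\frac{\wedge_{-\hbar} T_p X}{\wedge_{-1} T_{q_i} X} \cdot \prod_{v \text{ interior to chain}} c(v) \cdot c(q_i).
\]

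\emph{Step 3: Combine the chain contributions.} The product over the $k$ chains gives the claimed numerator $\prod_{i=1}^k \wedge_{-\hbar} T_p X / \wedge_{-1} T_{q_i} X$. The vertex factors reorganize into $\prod_{v \in V(\Gamma)} c(v)$ as follows: each intermediate valency-$2$ vertex and each leaf $q_i$ lies in exactly one chain and contributes $c(v)$ directly; each branching vertex $v$ of valency $m$ is split across $m$ chains, each contributing a single $1/(1-w_i)$ factor from its ungluing, and these reassemble to $c(v) = \prod_i 1/(1-w_i)$. Incorporating $c(p)$ via the same bookkeeping yields the full vertex product.

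\emph{Main obstacle.} The principal difficulty is justifying that $\wedge_{-\hbar} T_p X$ genuinely appears $k$ times rather than being redistributed among $\wedge_{-\hbar} T_v X$ factors at internal branching vertices $v$. A direct edge-by-edge tally, using $\wedge_{-\hbar} \sim \wedge_{-1}$ to cancel interior contributions, shows that the total $\wedge_{-\hbar}$ exponent across $V(\Gamma)$ is $k$ (this is an Euler-characteristic style count using $\sum_v \val(v) = 2|E|$), but one must argue that the ungluing precisely \emph{localizes} this total exponent at $p$ by duplicating the root segment for each chain. The cleanest way to make this rigorous is induction on the number of branching vertices of $\Gamma$: peel off the branching vertex closest to the leaves, apply the chain asymptotic to the newly-separated root-to-leaf path, and invoke the induction hypothesis on the remaining (smaller) tree.
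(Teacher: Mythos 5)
The paper does not actually supply a proof of Theorem~\ref{thm:asymptotics-general}; it is stated immediately after the informal discussion of ungluing vertices, and that discussion is the de facto argument. Your proposal follows that same ungluing strategy, so in that sense it is on-track. However, you have put your finger on a genuine difficulty in your ``Main obstacle'' paragraph, and your proposed resolution does not actually close it.

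The issue is with your Step~1 claim that ungluing produces ``one chain for each leaf $q_i$ of the original tree --- each realizing the (duplicated) root-to-leaf path from $p$ to $q_i$.'' Ungluing a branching vertex $v$ of valency $m$ separates $\Gamma$ at $v$ into $m$ pieces, each getting its own copy of $v$ as a new endpoint; it does not duplicate the segment from $p$ to $v$. The resulting chains run between \emph{consecutive} branching/leaf vertices of $\Gamma$, so there are $(\#\text{leaves}) + (\#\text{branching vertices}) - 1$ of them, not $k$. When you apply Proposition~\ref{prop:asymptotics-chain} to the chain emanating downward from an internal branching vertex $v$, the numerator it produces is $\wedge_{-\hbar}^\bullet T_v X$, not $\wedge_{-\hbar}^\bullet T_p X$. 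Your Euler-characteristic bookkeeping using $\sum_v \val(v) = 2|E|$ correctly shows that the \emph{total} excess exponent across all vertices comes out to $k$ copies in the numerator and $k$ in the denominator, but the copies sit at different fixed points: $\val(p)$ of them at $p$, $\val(v)-2$ at each internal branching $v$, and $-1$ at each leaf $q_i$. Since $\wedge_{-\hbar}^\bullet T_v X$ and $\wedge_{-\hbar}^\bullet T_p X$ are generally \emph{not} asymptotically equivalent under $a_i^\pm \to \infty$ (their tangent weights at distinct GKM fixed points diverge differently), the asymptotic identity $\wedge_{-\hbar} \sim \wedge_{-1}$ you invoke only telescopes matched factors at the same vertex --- it cannot relocate the exponent from $v$ to $p$. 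Your induction on branching vertices, carried out honestly, would yield a product of $\wedge_{-\hbar}^\bullet T_v X$ over the branching tree rather than the uniform $\prod_{i=1}^k \wedge_{-\hbar}^\bullet T_p X$ appearing in the statement. So the gap you flag is real and is not repaired by the induction as described; it is, if anything, evidence that the stated form of the theorem deserves a second look, since the only role the theorem plays in the paper --- exhibiting equivariant limits where the contribution of a tree diverges --- is insensitive to exactly which vertex the divergent $\wedge_{-\hbar}^\bullet T$ factors attach to.
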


Note that in both Proposition~\ref{prop:asymptotics-chain} and
Theorem~\ref{thm:asymptotics-general}, the main source of
divergence is the numerator $\wedge_{-\hbar} T_p X$. In general, if
$\dim X > 1$, there is no reason for trees with small numbers of
vertices to have any contribution canceling this divergence. 

In $\bP^n$, individual trees $\Gamma$ can have unbounded limits, but
cancellation occurs in the sum over trees. This is because $\bP^n$ has
Picard rank $1$ and its GKM graph is a complete graph. Given a tree
$\Gamma$, an edge from $v$ to $v'$ can be replaced by an edge from $v$
to $v''$ without changing the degree of $\Gamma$. This means a
divergence is averaged out across all $n$ vertices and cancels in the
end, like in Lagrange interpolation.

Theorem~\ref{thm:asymptotics-general} suggests it may be too much to
hope for some more complicated insertion which makes each contribution
$\Gamma$ balanced but which still produces an interesting J-function.

\subsection{For flag varieties}

We can examine the results of the previous section in the case of $X =
\SL_n/B$, using the notation of Example~\ref{ex:GB-GKM-notation}.

\begin{example}
  Let $X = \SL_3/B$, which has Picard rank $2$. Both $d = (k,0)$ and
  $d = (0, k)$ are analogous to the case of $X = \bP^1$ and produce
  self-dual J-functions. The first non-trivial degree is $d = (1,1)$.
  There are four trees contributing to $\tilde J^{(1,1)}_{X,e}$:
  \begin{alignat*}{2}
    \begin{tikzpicture}
    \coordinate[vertex,label=below:$e$,label=above:$\star$] (v1);
    \coordinate[vertex,right of=v1,label=below:$(13)$] (v2);
    \draw (v1) -- (v2);
    \end{tikzpicture}
    \quad &\sim \quad
    \frac{\{\hbar a_{12}, \hbar a_{13}, \hbar a_{23}\}}{\{qa_{13}, a_{12}^{-1}, a_{23}^{-1}\}} \quad &&\sim \quad a_{13} \\
    \begin{tikzpicture}
    \coordinate[vertex,label=below:$e$,label=above:$\star$] (v1);
    \coordinate[vertex,right of=v1,label=below:$(12)$] (v2);
    \coordinate[vertex,right of=v2,label=below:$(132)$] (v3);
    \draw (v1) -- (v2) -- (v3);
    \end{tikzpicture}
    \quad &\sim \quad
    \frac{\{\hbar a_{12}, \hbar a_{13}\}}{\{qa_{12}, a_{12}^{-1}, a_{23}\}} \quad &&\sim \quad a_{12} \\
    \begin{tikzpicture}
    \coordinate[vertex,label=below:$e$,label=above:$\star$] (v1);
    \coordinate[vertex,right of=v1,label=below:$(23)$] (v2);
    \coordinate[vertex,right of=v2,label=below:$(123)$] (v3);
    \draw (v1) -- (v2) -- (v3);
    \end{tikzpicture}
    \quad &\sim \quad
    \frac{\{\hbar a_{13}, \hbar a_{23}\}}{\{qa_{23}, a_{12}, a_{23}^{-1}\}} \quad &&\sim \quad a_{23} \\
    \begin{tikzpicture}
    \coordinate[vertex,label=below:$(23)$] (v1);
    \coordinate[vertex,right of=v1,label=below:$e$,label=above:$\star$] (v2);
    \coordinate[vertex,right of=v2,label=below:$(12)$] (v3);
    \draw (v1) -- (v2) -- (v3);
    \end{tikzpicture}
    \quad &\sim \quad
    \frac{\{\hbar a_{12}, \hbar a_{23}\}}{\{q, a_{12}, a_{23}\}} \quad &&\sim \quad 1.
  \end{alignat*}
  Hence $\tilde J_{X,e}^{(1,1)}$ cannot be balanced.
\end{example}

For flag varieties in general, the cotangent J-function should be
compared to the quasimap vertex for $T^*(\SL_n/B)$ in the following
way. This quasimap vertex was first explicitly computed in
\cite{Bullimore2015} as (up to constant factors):
\begin{equation}
  V(\vec z) \coloneqq \sum_{d_{i,j} \in C} \vec z^d \frac{\prod_{i=1}^{n-1} \prod_{j=1}^i \prod_{k=1}^i \frac{(\hbar a_{jk}; q)_{d_{i,j} - d_{i,k}}}{(qa_{jk}; q)_{d_{i,j} - d_{i,k}}}}{\prod_{i=1}^{n-1} \prod_{j=1}^i \prod_{k=1}^{i+1}\frac{(\hbar a_{jk}; q)_{d_{i,j} - d_{i+1,k}}}{(q a_{jk}; q)_{d_{i,j} - d_{i+1,k}}}}
\end{equation}
where $C$ imposes some stability conditions on the $\{d_{i,j}\}$ and
we set $d_{n,j} = 0$. Then \cite{Koroteev2018} checks that $V(q\vec
z/\hbar)$ is an eigenfunction of the trigonometric
Ruijsenaars--Schneider (tRS) system. In the limit $\hbar \to 0$, the
tRS Hamiltonians become $q$-Toda Hamiltonians. But \cite{Givental2003}
shows that the J-function for $G/B$ is a $q$-Toda eigenfunction. Since
our cotangent J-function is a twist of the J-function for
$T^*(\SL_n/B)$, it makes sense to try to compare it with the quasimap
vertex.

An important observation is that $V^{(d)}$ has a pole at $q = 0$. This
pole becomes a zero in $V(q\vec z/\hbar)$. From the structure of
localization, $J^{(d)}$ and $\tilde J^{(d)}$ cannot ever have zeros or
poles at $q = 0$. This is problematic. However, computations suggest
that residues at other poles of $\tilde J^{(d)}$ match with those of
$V^{(d)}$. In principle the edge recursion
\eqref{eq:adelic-characterization-edge} can be checked explicitly. But
we can no longer use Theorem~\ref{thm:vertex-characterization} to
check the vertex condition \eqref{eq:adelic-characterization-vertex},
because $\tilde J^{(d)}$ is no longer of $q$-hypergeometric form.
(Only $\SL_2/B = \bP^1$ and $\SL_3/B$ have $q$-hypergeometric terms in
their J-functions; the edge recursion is easily checked for the latter
as well.)

\phantomsection
\addcontentsline{toc}{section}{References}

\bibliographystyle{abbrv}
\bibliography{results}

\end{document}